\newcommand{\optionaldesc}[2]{%
	\phantomsection
	#1\protected@edef\@currentlabel{#1}\label{#2}%
}
\numberwithin{equation}{section}
\let\OLDthebibliography\thebibliography
\renewcommand\thebibliography[1]{
	\OLDthebibliography{#1}
	\setlength{\parskip}{1pt}
	\setlength{\itemsep}{1pt plus 0.3ex}
}
\definecolor{ForestGreen}{rgb}{0.1,0.6,0.05}
\definecolor{EgyptBlue}{rgb}{0.063,0.1,0.6}
\def\Wo{\widetilde{W}_0^{s,p}(\Omega)}
\def\W{W^{s,p}(\mathbb{R}^N)}
\newtheorem{theorem}{Theorem}[section]
\newtheorem{lemma}[theorem]{Lemma}
\newtheorem{proposition}[theorem]{Proposition}
\newtheorem{corollary}[theorem]{Corollary}
\theoremstyle{definition}
\newtheorem{remark}[theorem]{Remark}
\title{Payne nodal set conjecture for the fractional $p$-Laplacian in Steiner symmetric domains
	\\[1.5em]
	
	\small{\textit{Dedicated to 90th birthday of Nina Nikolaevna Uraltseva, with deep respect and admiration}}\\[1em]	
}
\author{Vladimir Bobkov ~\&~ Sergey Kolonitskii \\}
\date{}
\begin{document}
	\maketitle
	\vspace*{-5ex}
	\begin{abstract}
		Let $u$ be either a second eigenfunction of the fractional $p$-Laplacian or a least energy nodal solution of the equation $(-\Delta)^s_p \, u = f(u)$ with superhomogeneous and subcritical nonlinearity $f$, in a bounded open set $\Omega$ and under the nonlocal zero Dirichlet conditions. 
		Assuming only that $\Omega$ is Steiner symmetric, we show that the supports of positive and negative parts of $u$ touch $\partial\Omega$.
		As a consequence, the nodal set of $u$ has the same property whenever $\Omega$ is connected.
		The proof is based on the analysis of equality cases in certain polarization inequalities involving positive and negative parts of $u$, and on alternative characterizations of second eigenfunctions and least energy nodal solutions.
		
		\par
		\smallskip
		\noindent {\bf  Keywords}: 
		fractional $p$-Laplacian; second eigenfunctions; least energy nodal solutions; Payne conjecture; nodal set; polarization.
		
		\noindent {\bf MSC2010}: 
		35J92,	
		35R11,	
		35B06,	
		49K30.	
	\end{abstract}
	
	\renewcommand{\cftdot}{.}
	\begin{quote}	
		\tableofcontents	
		\addtocontents{toc}{\vspace*{-2ex}}
	\end{quote}

	\section{Introduction}\label{sec:intro}
	Let $s \in (0,1)$, $p \in (1,+\infty)$, and $\Omega$ be a bounded open set in $\mathbb{R}^N$, $N \geqslant 1$. 
	Consider the problem
	\begin{equation}\label{eq:D}
		\tag{$\mathcal{D}$}
		\left\{
		\begin{aligned}
			(-\Delta)^s_p \, u &= f(u) 
			&&\text{in } \Omega, \\
			u &= 0  &&\text{in } \mathbb{R}^N \setminus \Omega,
		\end{aligned}
		\right.
	\end{equation}
	where $(-\Delta)^s_p$ is the fractional $p$-Laplacian which can be defined for sufficiently regular functions as
	$$
	(-\Delta)^s_p u(x) 
	= 
	2
	\lim_{\varepsilon \to 0+} \int_{\mathbb{R}^N \setminus B(x,\varepsilon)} \frac{|u(x)-u(y)|^{p-2}(u(x)-u(y))}{|x-y|^{N+ps}} \, dy,
	$$
	and the nonlinearity $f: \mathbb{R} \to \mathbb{R}$ will be discussed below.
	We understand the problem \eqref{eq:D} in the following weak sense. 
	Consider the fractional Sobolev space
	$$
	\W 
	= 
	\{
	u \in L^p(\mathbb{R}^N):~ [u]_p < +\infty
	\},
	$$
	where $[\,\cdot\,]_p$ stands for the Gagliardo seminorm:
	$$
	[u]_p 
	= 
	\left(
	\int_{\mathbb{R}^N} \int_{\mathbb{R}^N} \frac{|u(x)-u(y)|^{p}}{|x-y|^{N+ps}} \, dxdy
	\right)^{1/p},
	$$
	and we will also use $\|\cdot \|_p$ for the standard norm in $L^p(\mathbb{R}^N)$.
	We denote by $\Wo$ the completion of $C_0^\infty(\Omega)$ with respect to the norm $\|\cdot\|_p+[\,\cdot\,]_p$ of $\W$. 
	This space is uniformly convex, separable, Banach space with the norm $[\,\cdot\,]_p$, and the embedding $\Wo \hookrightarrow L^p(\Omega)$ is compact, see \cite{BLP}.
	Weak solutions of \eqref{eq:D} are critical points of the energy functional $E: \Wo \to \mathbb{R}$ defined as
	$$
	E(u) = \frac{1}{p} \, [u]_p^p - \int_\Omega F(u) \, dx,
	$$
	where $F(z)=\int_0^z f(t)\,dt$.
	In other words, a function $u \in \Wo$ is a (weak) solution of \eqref{eq:D} if
	\begin{equation}\label{eq:DEu}
		\langle D E(u),\xi \rangle
		\equiv 
		\frac{1}{p} \,\langle D[u]_p^p, \xi \rangle 
		-
		\int_\Omega f(u) \xi \, dx
		= 0
		\quad \text{for any}~ \xi \in \Wo.
	\end{equation}
	For convenience, we note explicitly that
	\begin{equation}\label{eq:Dupxi}
		\langle D[u]_p^p, \xi \rangle 
		=
		p \,
		\int_{\mathbb{R}^N} \int_{\mathbb{R}^N} \frac{|u(x)-u(y)|^{p-2}(u(x)-u(y))(\xi(x)-\xi(y))}{|x-y|^{N+ps}} \, dxdy.
	\end{equation}
	
	\medskip
	We assume that the nonlinearity $f$ is either of the following two types:
	\begin{enumerate}
		\item[{\optionaldesc{$(\mathcal{F}_r)$}{Fr}}]
		The \textit{resonant} case $f(z) = \lambda_2 |z|^{p-2}z$, where $\lambda_2$ is the second eigenvalue of the fractional Dirichlet $p$-Laplacian in $\Omega$.
		This eigenvalue can be characterized as 
		\begin{equation}\label{eq:lambda2}
			\lambda_2 = \inf_{\mathcal{A} \subset \mathcal{G}_2} \sup_{u \in \mathcal{A}} [u]_p^p,
		\end{equation}
		see, e.g., \cite{BrPar,drabekrobinson}, 
		where we denote
		\begin{align}
			\mathcal{G}_2 &= \{\mathcal{A} \subset \mathcal{S}:~\text{there exists a continuous odd surjection } h: S^1 \to \mathcal{A}\},\\
			\label{eq:Slp}
			\mathcal{S} &= \{u \in \Wo:~ \|u\|_p = 1\},
		\end{align}
		and $S^1$ stands for a circle in $\mathbb{R}^2$.
		
		\item[{\optionaldesc{$(\mathcal{F}_s)$}{Fs}}]
		The \textit{superhomogeneous} and subcritical 
		case characterized by the following assumptions:
		\begin{enumerate}[label={\rm(\alph*)}]
			\item\label{Fs-a} $f \in C(\mathbb{R}) \cap C^1(\mathbb{R} \setminus \{0\})$ and there exist constants $C>0$ and $q \in (p,p_s^*)$, where $p_s^* = \frac{Np}{N-ps}$ when $N > ps$ and $p_s^* = +\infty$ when $N \leqslant ps$, such that
			$$
			|f(z)| \leqslant C \, (1+|z|^{q-1})
			\quad \text{for any}~ z \in \mathbb{R}.
			$$
			\item\label{Fs-b} 
			The function $z \mapsto \frac{f(z)}{|z|^{p-2}z}$ is decreasing in $(-\infty,0)$ and increasing in $(0,+\infty)$ (both monotonicities are strict), and
			$$
			\lim_{|z| \to 0} \frac{f(z)}{|z|^{p-2}z} = 0
			\quad \text{and} \quad 
			\lim_{|z| \to +\infty} \frac{f(z)}{|z|^{p-2}z} = +\infty.
			$$
		\end{enumerate}
		The model case of such nonlinearity is $f(z) = |z|^{q-2}z$ with $q \in (p, p^*_s)$, and the inequality $q>p$ motivates the word ``superhomogeneous'' (''superlinear'' in the case $p=2$).
		The assumptions \ref{Fs-a} and \ref{Fs-b} are not minimal and can be weakened to some extent, see Remark~\ref{rem:Fs}.
	\end{enumerate}
	
	Under the resonance assumption~\ref{Fr}, the problem \eqref{eq:D} has a solution which is naturally called \textit{second eigenfunction}, see, e.g., \cite[Section~3]{Servadei2} for the case $p=2$ and
	\cite{BrPar} for the case $p>1$, and we refer to \cite{BLP,FranPal,HS,lindgren} for further results on the spectrum of the fractional $p$-Laplacian.
	Let us explicitly note that any second eigenfunction changes sign in $\Omega$, see \cite{BrPar}.
	
	Under the superhomogeneous assumption~\ref{Fs}, the problem \eqref{eq:D} admits nodal solutions (i.e., sign-changing solutions), among which we will be interested only in solutions having minimal value of the functional $E$ among all other nodal solutions.
	Solutions with this property are called \textit{least energy nodal solutions} (LENS, for brevity). 
	The existence of LENS was established in the linear case $p=2$ in 
	\cite{GST} for the model nonlinearity $f(z) = |z|^{q-2}z$ and in
	\cite{GTZ,GYZ,luo,tengwangwang} for more general nonlinearities satisfying assumptions similar to \ref{Fs}. 
	In the nonlinear case $p>1$, corresponding existence results were obtained in 
	\cite{CNW,WZ}, and \ref{Fs} comes from \cite{CNW}\footnote{Note that \cite[Theorem~1.2]{CNW} additionally requires $\Omega$ to be a smooth domain, $1<p<N/s$, $N \geqslant 2$, and $f \in C^1(\mathbb{R})$.
		However, thanks to the properties of the space $\Wo$, inspection of proofs from \cite{CNW} shows that all the results from \cite[Section~4]{CNW} and hence \cite[Theorem~1.2]{CNW} remain valid under the present weaker assumptions.}.
	Least energy nodal solutions and tightly connected with second eigenfunctions and can be seen as objects of the same nature, see \cite{GST} for some asymptotic results.
	An important property of LENS in the superhomogeneous case~\ref{Fs}, which will be heavily employed in our arguments, is the fact that such solutions can be characterized as minimizers of $E$ over the so-called nodal Nehari set, see Section~\ref{section:aux2}.
	Analogous constructive variational characterization does not hold in the subhomogeneous regime (cf.\ \cite{BMPTW}), that is why we do not cover it.
	
	Throughout the text, we decompose a function $w \in \W$ as
	\begin{equation}\label{eq:notationpm}
		w = w^+ + w^-,
		\quad \text{where} \quad
		w^+ = \max\{u,0\}
		\quad \text{and} \quad 
		w^- = \min\{u,0\}.
	\end{equation} 
	In particular, $w^\pm \in \W$ and $w^+ \geqslant 0$, $w	^- \leqslant 0$ a.e.\ in $\mathbb{R}^N$.
	
	It is known that any second eigenfunction or LENS $u$ is continuous in $\Omega$, see, e.g., the combination of an $L^\infty$-bound \cite[Theorem~4.1]{HS} with a local H\"older estimate \cite[Corollary~5.5]{IMS}, see also \cite{BrPar,KMS}.
	With this regularity at hand, we define the \textit{nodal set} of $u$ as
	\begin{equation*}
		\mathcal{Z}(u) = \overline{\{x \in \Omega:~ u(x) = 0\}}.
	\end{equation*}
	Note that $\mathcal{Z}(u)$ might be empty if $\Omega$ is not connected. 
	For instance, this is a likely behavior when $\Omega$ is a disjoint union of two equimeasurable balls, cf.~\cite{BrPar}.
	At the same time, if $\Omega$ is connected (i.e., $\Omega$ is a domain), then $\mathcal{Z}(u) \neq \emptyset$.

	We are interested in properties of the nodal set and supports of positive and negative parts of second eigenfunctions and LENS, and establish the following result.
	\begin{theorem}\label{thm:payne}
		Assume that $\Omega$ is Steiner symmetric with respect to the hyperplane 
		$$
		H_0 = \{(x_1,\dots,x_N) \in \mathbb{R}^N:~x_1 = 0\}.
		$$
		Let $u$ be either a second eigenfunction or a least energy nodal solution of \eqref{eq:D}. 
		Then
		\begin{equation}\label{eq:dist0x}
			\mathrm{dist}(\mathrm{supp}\, u^-, \partial \Omega) = 0
			\quad \text{and} \quad
			\mathrm{dist}(\mathrm{supp}\, u^+, \partial \Omega) = 0.
		\end{equation}
		Consequently, if $\Omega$ is connected, then
		\begin{equation}\label{eq:dist0}
			\mathrm{dist}(\mathcal{Z}(u), \partial \Omega) = 0.
		\end{equation}
	\end{theorem}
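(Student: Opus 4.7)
The plan is a proof by contradiction based on polarization. Suppose $d:=\mathrm{dist}(\mathrm{supp}\,u^+,\partial\Omega)>0$; the case of $u^-$ is symmetric. For $\mu\in\mathbb{R}$ write $H_\mu=\{x_1=\mu\}$, $H_\mu^+=\{x_1>\mu\}$, $H_\mu^-=\{x_1<\mu\}$, and let $\sigma_\mu$ denote reflection across $H_\mu$. For $v\in\W$, the polarization of $v$ with respect to $H_\mu^+$ is $v^{H_\mu^+}(x)=\max\{v(x),v(\sigma_\mu x)\}$ on $H_\mu^+$ and $\min\{v(x),v(\sigma_\mu x)\}$ on $H_\mu^-$. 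Steiner symmetry of $\Omega$ with respect to $H_0$ ensures $\sigma_\mu(\Omega\cap H_\mu^+)\subset\Omega$ for every $\mu\geqslant 0$, so polarization maps $\Wo$ into itself. The pivotal tool is the fractional polarization inequality $[v^{H_\mu^+}]_p\leqslant[v]_p$, together with the rigidity that equality forces $v=v^{H_\mu^+}$ or $v=v^{H_\mu^+}\circ\sigma_\mu$ almost everywhere in $\mathbb{R}^N$.

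The competitor to $u$ is obtained by polarizing $u^+$ and $-u^-$ in \emph{opposite} directions across the same $H_\mu$: set $U_+=(u^+)^{H_\mu^+}$, $U_-=(-u^-)^{H_\mu^-}$, and $\tilde u=U_+-U_-$. A direct check shows that $\mathrm{supp}\,U_+$ and $\mathrm{supp}\,U_-$ are disjoint, whence $\tilde u^+=U_+$, $\tilde u^-=-U_-$, and equimeasurability of polarization gives $\|\tilde u^\pm\|_p=\|u^\pm\|_p$. A bookkeeping of the Gagliardo double integral split into the four regions $H_\mu^{\pm}\times H_\mu^{\pm}$ yields $[\tilde u]_p^p\leqslant[u]_p^p$, with equality forcing $u^+=U_+$ and $-u^-=U_-$, or the $\sigma_\mu$-reflected versions of these identities. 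By $d>0$ and Steiner symmetry I can select $\mu\geqslant 0$ such that $\sigma_\mu(\mathrm{supp}\,u^+\cap H_\mu^+)\subset\Omega\setminus\mathrm{supp}\,u^-$; for such $\mu$, $\tilde u$ is a genuine sign-changing admissible function.

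I would then feed $\tilde u$ into the appropriate variational characterization of $u$. In the resonant case \ref{Fr}, using that $u^\pm$ generate an odd continuous family in $\mathcal{G}_2$ parametrized by $(t,s)\in S^1\mapsto(tu^++su^-)/\|tu^++su^-\|_p$ on which $\sup[\cdot]_p^p=\lambda_2$, the analogous family built from $\tilde u^\pm$ is dominated pointwise by the polarization inequality; \eqref{eq:lambda2} then squeezes $[\tilde u]_p=[u]_p$ and activates the rigidity. In the superhomogeneous case \ref{Fs}, I would project $\tilde u$ onto the nodal Nehari set (Section~\ref{section:aux2}) and use the LENS-as-minimizer property there, together with the preservation of $\int_\Omega F(u)\,dx$ under the polarization (from equimeasurability $u^\pm\sim\tilde u^\pm$ plus the disjointness of the supports), to again conclude $[\tilde u]_p=[u]_p$.

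The final step turns the rigidity into a contradiction with $d>0$; for each admissible $\mu$ the rigidity leaves two options, so one must exploit the continuum of $\mu$'s together with the structural equation for $u$ (and the strong minimum principle for $(-\Delta)^s_p$ applied to $u^+$) to exclude one option uniformly and then squeeze $\mathrm{supp}\,u^+$ out of $\Omega$, against $u^+\not\equiv 0$. The main obstacle I expect is the sharp equality case of the fractional polarization inequality, where nonlocality couples all four half-space quadrants and the rigidity analysis requires pinning down exactly when the pointwise inequality $|a-b|^p+|c-d|^p\geqslant|\max\{a,c\}-\min\{b,d\}|^p+|\min\{a,c\}-\max\{b,d\}|^p$ is saturated; a secondary technicality is realizing the polarized competitor inside the odd min-max class $\mathcal{G}_2$ in the resonant case, whereas the LENS case is cleaner thanks to the Nehari minimization.
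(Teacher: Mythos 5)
Your high‑level plan (polarization competitor, variational characterization forcing equality in the polarization inequality, then rigidity) matches the paper's strategy, and the competitor you build, $\tilde u=(u^+)^{H_\mu^+}-(-u^-)^{H_\mu^-}$, is exactly the paper's $P_\mu u$ via the identity $P_\mu u=P_\mu u^+-\widetilde P_\mu(-u^-)$. However there are two genuine issues.

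First, a technical but real point: the paper does not squeeze via $[\tilde u]_p\leqslant[u]_p$ alone. Its characterizations of second eigenfunctions and LENS (Propositions~\ref{prop:lambda2-main2} and~\ref{prop:altern-char-lens}) require the two \emph{separate} pairing inequalities $\langle D[\tilde u]_p^p,\tilde u^\pm\rangle\leqslant\langle D[u]_p^p,u^\pm\rangle$, which is precisely what Proposition~\ref{prop:polarization} supplies, including its three‑way equality case (the extra option \ref{prop:polarization:3} of a symmetric $u^\pm$ that does not force $P_\mu u=u$ or $P_\mu u=u\circ\sigma_\mu$). Your ``pointwise domination of the $S^1$ family'' observation can be made to work in the resonant case (for $\alpha\beta>0$ one has $P_\mu(\alpha u^++\beta u^-)=\alpha\tilde u^++\beta\tilde u^-$), but reducing the attained maximum back to $\alpha_0=\beta_0$ still forces you through the argument of Proposition~\ref{prop:lambda2-characterization} using Lemma~\ref{lem:appendix:inequality}; and in the LENS case the Nehari projection argument (Proposition~\ref{prop:altern-char-lens}) really does need the two one‑sided pairing bounds, not just the total seminorm bound.

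Second, and more seriously, your final step is where the proof actually lives, and as sketched it does not close. You invoke ``a continuum of $\mu$'s'' and ``the strong minimum principle for $(-\Delta)^s_p$ applied to $u^+$,'' but $u^+$ is not itself a solution of the equation (the operator is nonlinear and nonlocal, so one cannot split), and no mechanism is given for ``excluding one option uniformly.'' The paper's actual argument uses \emph{exactly two} hyperplanes: first $\mu=0$, where Steiner symmetry gives $P_0\Omega=\Omega$ and the rigidity yields (WLOG) $P_0u=u$, hence $u(x)\leqslant u(\sigma_0(x))$ on $\Sigma_0^+$; then the \emph{critical} level $a=d_1/2$ where $d_1=\sup\{t\geqslant0:\mathrm{supp}\,u^-+te_1\subset\Omega\}>0$, chosen so that $\mathrm{supp}\,P_au^-=\widetilde P_a(\mathrm{supp}\,u^-)$ just touches $\partial\Omega\cap\Sigma_a^+$ (this is where Lemmas~\ref{lem:Pa-Sobolev} and~\ref{lem:Pa-Sobolev2} are needed to keep $P_au\in\Wo$). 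Since $u\geqslant0$ near $\partial\Omega$ but $P_au^-$ touches the boundary, the alternative $P_au=u$ is excluded, so rigidity forces $P_au=u\circ\sigma_a$, i.e.\ $u(x)\leqslant u(\sigma_a(x))$ on $\Sigma_a^-$. Iterating $\sigma_0$ and $\sigma_a$ alternately starting from any $x$ with $u(x)>0$ produces a nondecreasing chain of values at points whose first coordinates escape to $\pm\infty$ (as $\sigma_a\circ\sigma_0$ is a translation by $2a$), contradicting boundedness of $\Omega$. This chain‑of‑reflections step is the key idea missing from your proposal; without it the rigidity at a single $\mu$ (or a continuum of them) does not produce a contradiction.
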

	
	\begin{remark}
		The assumption that $\Omega$ is Steiner symmetric with respect to the hyperplane $H_0$ is equivalent to saying that $\Omega$ is convex with respect to the $x_1$-axis and symmetric with respect to $H_0$. 
		The convexity with respect to the $x_1$-axis means that any segment parallel to the first coordinate vector $e_1$ with endpoints in $\Omega$ is fully contained in $\Omega$, see, e.g., Figure~\ref{fig:proof}.
	\end{remark}
	
	In the \textit{local} \textit{linear} case $s=1$, $p=2$, 
	in which $(-\Delta)^s_p$ corresponds to the standard Laplace operator, 
	the assertion \eqref{eq:dist0} for any second eigenfunction $u$ in a domain $\Omega$ is the content of the famous Payne nodal set conjecture \cite{payne1973}.
	In fact, thanks to the Pohozaev identity, \eqref{eq:dist0} is equivalent to \eqref{eq:dist0x}.
	It is known that, in general, Payne's conjecture is not true, since there exist domains whose second eigenfunction $u$ satisfies $\mathrm{dist}(\mathcal{Z}(u), \partial \Omega) > 0$, see \cite{fournais,HoffmannOstenhof} and references to these works.
	Nevertheless, the conjecture is valid on certain classes of domains.
	This was established in \cite{damascelliNodal,payne1973} for Steiner symmetric domains as in Theorem~\ref{thm:payne} (by different methods), and we also refer to \cite{aftalion,FK,GrumiauTroestler2009,kiwan,melas} for some other classes of domains, as well as for the superlinear case \ref{Fr}.
	
	In the \textit{local} \textit{nonlinear} case $s=1$, $p>1$, the validity of Payne's conjecture for second eigenfunctions and LENS was proved in \cite{BK1} for Steiner symmetric domains under certain additional regularity assumptions on the boundary. 
	We are not aware of other results on Payne's conjecture in local nonlinear settings, but we refer to \cite{ADS1,BartschWethWillem,BDG,BK2,ChG} for some related results.
	
	In the \textit{nonlocal} \textit{linear} case $s \in (0,1)$, $p=2$, it was shown in \cite{BanKul,BBDG,DKK,FFTW,F} that second eigenfunctions in the ball are anti-symmetric with respect to central sections of the ball. 
	This result can be interpreted as the validity of Payne's conjecture in the ball.
	Thanks to these references, \cite{GST} guarantees that LENS share the same symmetry in the model superlinear case $f(z)=|z|^{q-2}z$ with $q \to 2$, and hence they also satisfy Payne's conjecture in the ball. 
	Again, up to our knowledge, we are not aware of such a result for other domains, even for spherical shells (cf.\ \cite{djitte-jarohs}, where the nonradiality of second eigenfunctions is shown for sufficiently thin spherical shells). 
	
	In the \textit{nonlocal} \textit{nonlinear} case $s \in (0,1)$, $p>1$, the results obtained in the present work seem to be the first on the Payne conjecture.
	It makes sense to mention, however, that Theorem~\ref{thm:payne} \textit{does not imply}, at least directly, that second eigenfunctions in the ball are \textit{nonradial}.
	Instead, Theorem~\ref{thm:payne} only yields that if a second eigenfunction is radial, then it has to oscillate near the boundary. 		
	In \cite{BBDG}, the Pohozaev identity from \cite{ROS} was used to show that such an unlikely behavior is indeed impossible, which resulted in the nonradiality of second eigenfunctions.
	Perhaps, similar strategy is applicable in the present nonlocal nonlinear settings, but the proper version of the Pohozaev identity is unknown to us. 
	Also, due to the absence of the Pohozaev identity, we cannot conclude that \eqref{eq:dist0x} is equivalent to \eqref{eq:dist0} when $\Omega$ is connected, in contrast to the local nonlinear and nonlocal linear cases. 
	That is, \eqref{eq:dist0x} is \textit{a priori} a stronger result than \eqref{eq:dist0}, at least when $s \in (0,1)$ and $p \neq 2$.

	\medskip
	
	The proof of Theorem~\ref{thm:payne} is based on two main ingredients - the polarization of functions (also known as the two-point rearrangement), and convenient characterizations of second eigenfunctions and LENS. 
	Both of these auxiliary results might be interesting in their own.
	On the fundamental level, the idea of the proof of Theorem~\ref{thm:payne} is analogous to that in \cite{BK1}, where a related result was established in the local nonlinear case.
	Later, this idea was developed in \cite{BBDG} for the nonlocal linear case when $\Omega$ is a ball, in which it has its own features and difficulties.
	In the present work, we further develop the approach from \cite{BBDG,BK1} to the nonlocal nonlinear case in Steiner symmetric sets, and note that, apart from the very general strategy, our proofs are different than those in \cite{BBDG,BK1}.
	In particular, using Proposition~\ref{prop:polarization}, the proof of \cite[Theorem~1.1]{BBDG} concerning the resonant linear case $f(z)=\lambda_2 z$ can be given in a simpler and more universal way.
	
	\medskip
	The rest of the article is organized as follows.
	Section~\ref{section:aux} is devoted to establishing certain inequalities for polarizations of functions with explicit information on equality cases, and in Section~\ref{section:aux2} we provide alternative characterizations of second eigenfunctions and LENS.
	Section~\ref{section:proof} contains the proof of our main result, Theorem~\ref{thm:payne}.
	Finally, Appendix~\ref{section:appendix} contains a few technical lemmas needed for the proofs from Sections~\ref{section:aux} and~\ref{section:aux2}.

	\section{Polarization inequalities}\label{section:aux}
	
	In this section, we deal with the classical systematization method called  \textit{polarization} (or, equivalently, \textit{two-point rearrangement}) of functions, see, e.g., \cite{baernstain,BartschWethWillem,BE,BK1,ashok1}. 
	Consider a hyperplane $H_a = \{x \in \mathbb{R}^N:\, x_1 = a \}$ where $x = (x_1,x_2,\dots,x_N)$, $a \in \mathbb{R}$, and let $\sigma_a(x) = (2a - x_1, x_2, \dots, x_N)$ be the reflection of a point $x$ with respect to $H_a$. 
	Denote the corresponding open half-spaces as 
	\begin{equation}\label{eq:Sigma}
		\Sigma_a^+ = \{ x \in \mathbb{R}^N:~ x_1 > a \} 
		\quad \text{and} \quad 
		\Sigma_a^- = \{ x \in \mathbb{R}^N:~ x_1 < a \}.
	\end{equation}
	Let $u: \mathbb{R} \to \mathbb{R}$ be a given measurable function. 
	The polarization of $u$ with respect to $H_a$ is a function $P_a u: \mathbb{R} \to \mathbb{R}$ defined as
	\begin{equation}\label{eq:Pol}
		P_a u(x) = 
		\begin{cases}
			\min \{u(x), u(\sigma_a(x))\}, &x \in \Sigma_a^+,\\
			u(x), &x \in H_a,\\
			\max \{u(x), u(\sigma_a(x))\}, &x \in \Sigma_a^-.
		\end{cases}
	\end{equation}
	It is known that $(P_a u)^\pm = P_a(u^\pm)$ in $\mathbb{R}^N$, see \cite[Lemma~2.1]{BartschWethWillem}, so hereinafter will write $P_a u^\pm$, for short.
	As a consequence, we have
	\begin{equation}\label{eq:espantion_of_polarization_positive_negative} 
		P_a (u^+ + u^-)
		= 
		P_a u
		=
		(P_a u)^+ + (P_a u)^-
		= 
		P_a u^+ + P_a u^-.
	\end{equation}
	Moreover, it is not hard to see that
	$$
	P_a u = u
	\quad \text{if and only if} \quad
	P_a u^+ = u^+ 
	~\text{and}~
	P_a u^- = u^-.
	$$
	For convenience, we define an ``opposite'' polarization $\widetilde{P}_a u$ of $u$ as
	\begin{equation}\label{eq:Pol2}
		\widetilde{P}_a u(x) = 
		\begin{cases}
			\max \{u(x), u(\sigma_a(x))\}, &x \in \Sigma_a^+,\\
			u(x), &x \in H_a,\\
			\min \{u(x), u(\sigma_a(x))\}, &x \in \Sigma_a^-.
		\end{cases}
	\end{equation}
	We observe that $P_a u^- = -\widetilde{P}_a (-u^-)$ and hence \eqref{eq:espantion_of_polarization_positive_negative} yields
	\begin{equation}\label{eq:polarization_positive_negative2} 
		P_a u
		=
		P_a u^+
		-
		\widetilde{P}_a (-u^-).
	\end{equation}
	
	The polarization can be also used to polarize sets.
	Namely, for a measurable set $\Omega$, $P_a \Omega$ and $\widetilde{P}_a \Omega$ are defined as
	\begin{equation*}
		P_a \Omega = 
		\begin{cases}
			\Omega \cap \sigma_a(\Omega) &\text{in } \Sigma_a^+,\\
			\Omega &\text{on } H_a,\\
			\Omega \cup \sigma_a(\Omega) &\text{in } \Sigma_a^-,
		\end{cases}
		\quad 
		\text{and}
		\quad
		\widetilde{P}_a \Omega = 
		\begin{cases}
			\Omega \cup \sigma_a(\Omega) &\text{in } \Sigma_a^+,\\
			\Omega &\text{on } H_a,\\
			\Omega \cap \sigma_a(\Omega) &\text{in } \Sigma_a^-.
		\end{cases}
	\end{equation*}
	Note that $\widetilde{P}_a \Omega = \mathbb{R}^N \setminus (P_a (\mathbb{R}^N \setminus \Omega))$. 		
	We refer to \cite[Proposition~2.2]{ashok1} and \cite[Section~2]{BK1} for an overview of main properties of $P_a \Omega$, see also \cite{brocksol}.
	
	The polarization has the following useful properties which will be important for us.
	First, \cite[Lemma~2.2]{BartschWethWillem} (see also \cite[Eq.~(3.7)]{brocksol}) yields
	\begin{equation}\label{eq:weak-pol}
		\int_{\mathbb{R}^N} f(P_a u^\pm) P_a u^\pm\,dx 
		= 
		\int_{\mathbb{R}^N}  f(u^\pm) u^\pm \,dx 
		\quad
		\text{and} 
		\quad
		\int_{\mathbb{R}^N} F(P_a u^\pm) \,dx = \int_{\mathbb{R}^N} F(u^\pm) \,dx.
	\end{equation}
	Second, if $u \in \W$, then $P_a u \in \W$, and we have
	\begin{equation}\label{eq:polineq1}
		[P_a u]_p
		\leqslant 
		[u]_p,
	\end{equation}
	see, e.g., \cite{baernstain,BE,brocksol,ashok1}.
	In fact, equality holds in \eqref{eq:polineq1} if and only if either $u(x)=P_a u(x)$ for a.e.\ $x \in \mathbb{R}^N$ or $u(\sigma_a(x))=P_a u(x)$ for a.e.\ $x \in \mathbb{R}^N$, see Remark~\ref{rem:equality}.
	Here, we also refer to \cite[Corollary, p.~4819]{BE}, but the reference requires $p \geqslant 2$ (and does not contain a proof),  and to \cite[Theorem~2.9]{baernstain}, but the reference requires $u$ to be nonnegative; see also \cite[Theorem~3.1]{ashok1} for a related result under different assumptions.
	
	The main aim of the present section is to provide a certain quantification of the inequality \eqref{eq:polineq1} with an explicit discussion of equality cases, thereby 
	extending and improving \cite[Lemma~2.3]{BBDG}.
	\begin{proposition}\label{prop:polarization}
		Let $a \in \mathbb{R}$ and $u \in \W$.
		Then
		\begin{align}
			\label{eq:improvement}
			\langle D[P_a u]_p^p, P_a u^+ \rangle 
			\leqslant
			\langle D[u]_p^p, u^+ \rangle,\\
			\label{eq:improvement2}
			\langle D[P_a u]_p^p, P_a u^- \rangle 
			\leqslant
			\langle D[u]_p^p, u^- \rangle.
		\end{align}
		Moreover, equality takes place in \eqref{eq:improvement} (respectively, in \eqref{eq:improvement2}) if and only if either of the following cases holds:
		\begin{enumerate}[label={\rm(\roman*)}]
			\item\label{prop:polarization:1} $u(x)=P_a u(x)$ for a.e.\ $x \in \mathbb{R}^N$;
			\item\label{prop:polarization:2} $u(\sigma_a(x))=P_a u(x)$ for a.e.\ $x \in \mathbb{R}^N$;
			\item\label{prop:polarization:3} $u^+(x) = u^+(\sigma_a(x))$ for a.e.\ $x \in \mathbb{R}^N$ (respectively, $u^-(x) = u^-(\sigma_a(x))$ for a.e.\ $x \in \mathbb{R}^N$).
		\end{enumerate}
	\end{proposition}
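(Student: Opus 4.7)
The plan is to carry out a standard ``four-quadrants'' polarization reduction for the quasilinear form and then to analyze the resulting pointwise inequality and its equality cases. First I would split the double integral defining $\langle D[u]_p^p, u^+\rangle$ according to whether each of $x, y$ lies in $\Sigma_a^+$ or $\Sigma_a^-$, and then use that $\sigma_a$ is a measure-preserving isometry to fold the three pieces touching $\Sigma_a^-$ into integrals over $\Sigma_a^+ \times \Sigma_a^+$. Writing $A := u(x)$, $A' := u(\sigma_a x)$, $B := u(y)$, $B' := u(\sigma_a y)$, and $J_p(t) := |t|^{p-2}t$, this expresses $\langle D[u]_p^p, u^+\rangle$ as a sum over $\Sigma_a^+ \times \Sigma_a^+$ of the two combinations
\begin{equation*}
J_p(A-B)(A^+-B^+) + J_p(A'-B')(A'^+-B'^+), \qquad J_p(A-B')(A^+-B'^+) + J_p(A'-B)(A'^+-B^+),
\end{equation*}
weighted by the kernels $|x-y|^{-(N+ps)}$ and $|x-\sigma_a y|^{-(N+ps)}$ respectively, which satisfy $|x-y| \leqslant |x-\sigma_a y|$ on this region.

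Next, applying the same folding to $\langle D[P_a u]_p^p, P_a u^+\rangle$ yields the same structure with $(A, A', B, B')$ replaced by its min-max sorted version $(\min(A,A'), \max(A,A'), \min(B,B'), \max(B,B'))$; the identity $(\min(c,d))^+ = \min(c^+,d^+)$, which follows from monotonicity of $t \mapsto t^+$, ensures positive parts behave correctly under sorting. Hence \eqref{eq:improvement} reduces to an elementary pointwise four-variable inequality that I would isolate as a technical lemma in the appendix: for all $a_i, b_j \in \mathbb{R}$ and weights $r \geqslant s > 0$, the corresponding weighted sum does not increase under min-max sorting of $(a_1, a_2)$ and $(b_1, b_2)$. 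Only the case in which $(a_1, a_2)$ and $(b_1, b_2)$ are oppositely ordered is nontrivial; there the required estimate reduces to a submodularity property of the function $T(a,b) := J_p(a-b)(a^+-b^+)$ together with $r \geqslant s$, which I would establish by a sign-based case split exploiting the strict convexity of $t \mapsto |t|^p$.

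The main obstacle, and the genuinely new contribution relative to \cite[Lemma~2.3]{BBDG}, is the sharp characterization of equality. Pointwise equality can occur in three essentially disjoint ways --- the pair $(a_1, a_2)$ is sorted in the same order as $(b_1, b_2)$, it is sorted in the opposite order, or the positive parts alone satisfy $a_1^+ = a_2^+$ and $b_1^+ = b_2^+$ --- and a priori these alternatives might be realized on different positive-measure subsets of $\Sigma_a^+ \times \Sigma_a^+$. Globalizing this mixed pointwise picture into the single alternative \ref{prop:polarization:1}, \ref{prop:polarization:2}, or \ref{prop:polarization:3} is the delicate step. I expect to achieve it via a Fubini-type argument exploiting the product structure of the equality set together with strict monotonicity of $J_p$: if on a positive-measure product the sortedness varies, then at a.e.\ fixed $x$ it must depend on $x$ alone, and this partition of $\Sigma_a^+$ then propagates into one of the global alternatives. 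The negative-part inequality \eqref{eq:improvement2} and its equality cases follow either by rerunning the argument verbatim with $u^-$ in place of $u^+$, or by applying \eqref{eq:improvement} to $-u$ and using the relations $P_a(-u) = -\widetilde{P}_a u$, $\widetilde{P}_a u = (P_a u) \circ \sigma_a$, $(-v)^+ = -v^-$, together with the reflection invariance of the Gagliardo bilinear form.
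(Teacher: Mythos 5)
Your proposal follows essentially the same route as the paper: fold the Gagliardo form into $\Sigma_a^+\times\Sigma_a^+$ with the paired kernels $r:=|x-y|^{-(N+ps)}>s:=|x-\sigma_a y|^{-(N+ps)}$, observe that polarization replaces $(u(x),u(\sigma_a x),u(y),u(\sigma_a y))$ by its min-max sorted version, reduce to a pointwise four-point inequality, and then globalize the equality cases via the product structure of the relevant sets. The key inequality you call submodularity of $T(\alpha,\beta)=|\alpha-\beta|^{p-2}(\alpha-\beta)(\alpha^+-\beta^+)$ is exactly the paper's four-point Lemma~\ref{lem:four_point_lemma}, and your Fubini-style globalization is exactly the paper's two-case argument on $|A_{+-}|_{2N}$.

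The one place where your plan deviates meaningfully is in how you would prove the submodularity lemma and extract its equality case. The paper writes $T(A,B)-T(a,B)-T(A,b)+T(a,b)=\int_a^A\!\int_b^B \partial^2_{\alpha\beta}T$, computes
$\partial^2_{\alpha\beta}T=-(p-1)|\alpha-\beta|^{p-2}\bigl[(p-2)\tfrac{\theta(\alpha)\alpha-\theta(\beta)\beta}{\alpha-\beta}+\theta(\alpha)+\theta(\beta)\bigr]$,
bounds the bracket between $(\theta(\alpha)+\theta(\beta))\min\{1,p-1\}$ and $(\theta(\alpha)+\theta(\beta))\max\{1,p-1\}$, and then regularizes the argument carefully because $T$ is only piecewise smooth across $\{\alpha=0\}$, $\{\beta=0\}$, and the diagonal. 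This representation gives the \emph{strict} inequality and the equality alternative ``$A\leqslant 0$ and $B\leqslant 0$'' essentially for free: the integrand vanishes identically on the rectangle iff the rectangle is contained in $\{\alpha\leqslant 0,\beta\leqslant 0\}$. Your stated plan --- ``sign-based case split exploiting the strict convexity of $t\mapsto|t|^p$'' --- works directly only in the all-nonnegative quadrant, where $T(\alpha,\beta)=|\alpha-\beta|^p$; in the mixed-sign quadrants the relevant mixed derivative is $-(p-1)|\alpha-\beta|^{p-3}[(p-1)\alpha-\beta]$ (and its mirror), which is not literally a convexity statement, and when the rectangle $[a,A]\times[b,B]$ straddles the coordinate axes a purely algebraic case split becomes fiddly, particularly for the strictness/equality characterization. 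So I would recommend adopting the integral-representation proof of the key lemma (or at least arguing per-quadrant and summing, as the paper does after truncating at $\pm\varepsilon$), since this is where the precision needed for the equality alternative in \ref{prop:polarization:1}--\ref{prop:polarization:3} actually lives. Your reduction of \eqref{eq:improvement2} to \eqref{eq:improvement} via $u^-=-(-u)^+$ coincides with the paper's.
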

	\begin{proof}
		Let us start with the inequality~\eqref{eq:improvement} and assume that $u^+ \not\equiv 0$ in $\Omega$.
		Throughout the proof, we denote, for brevity,
		\begin{equation}\label{eq:J}
			v = P_a u
			\quad \text{and} \quad  
			J(\alpha,\beta) = |\alpha - \beta|^{p-2}(\alpha - \beta)(\alpha^+ - \beta^+).
		\end{equation}
		We get from \eqref{eq:espantion_of_polarization_positive_negative}  that $v^\pm = P_a u^\pm$, so that \eqref{eq:improvement} is equivalent to
		\begin{equation}\label{eq:prop:mainineq1}
			\int_{\mathbb{R}^N}\int_{\mathbb{R}^N} \frac{J(v(x),v(y))}{|x-y|^{N+sp}} \, dxdy
			\leqslant
			\int_{\mathbb{R}^N}\int_{\mathbb{R}^N} \frac{J(u(x),u(y))}{|x-y|^{N+sp}} \, dxdy,
		\end{equation}
		cf.\ \eqref{eq:Dupxi}.
		Decomposing $\mathbb{R}^N = \Sigma_a^+ \cup H_a \cup \Sigma_a^-$ and noting that $H_a$ has zero $N$-measure, we get
		\begin{align}
			\int_{\mathbb{R}^N}\int_{\mathbb{R}^N} &\frac{J(v(x),v(y))}{|x-y|^{N+sp}} \, dxdy
			\\
			&=
			\int_{\Sigma_a^+}\int_{\Sigma_a^+} \frac{J(v(x),v(y))}{|x-y|^{N+sp}} \, dxdy
			+
			\int_{\Sigma_a^+}\int_{\Sigma_a^+} \frac{J(v(\sigma_a(x)),v(y))}{|\sigma_a(x)-y|^{N+sp}} \, dxdy
			\\
			\label{eq:prop:decomp1}				
			&+
			\int_{\Sigma_a^+}\int_{\Sigma_a^+} \frac{J(v(x),v(\sigma_a(y)))}{|x-\sigma_a(y)|^{N+sp}} \, dxdy
			+
			\int_{\Sigma_a^+}\int_{\Sigma_a^+} \frac{J(v(\sigma_a(x)),v(\sigma_a(y)))}{|\sigma_a(x)-\sigma_a(y)|^{N+sp}} \, dxdy,
		\end{align}
		and an analogous representation holds for the right-hand side of \eqref{eq:prop:mainineq1}.
		Thus, in order to prove \eqref{eq:prop:mainineq1}, it is sufficient to establish the inequality
		\begin{align}
			\notag
			&\frac{J(v(x),v(y))}{|x-y|^{N+sp}} + \frac{J(v(\sigma_a(x)),v(y))}{|\sigma_a(x)-y|^{N+sp}} +  \frac{J(v(x),v(\sigma_a(y)))}{|x-\sigma_a(y)|^{N+sp}} + \frac{J(v(\sigma_a(x)),v(\sigma_a(y)))}{|\sigma_a(x)-\sigma_a(y)|^{N+sp}}
			\\
			\label{eq:8}
			&\leqslant
			\frac{J(u(x),u(y))}{|x-y|^{N+sp}} + \frac{J(u(\sigma_a(x)),u(y))}{|\sigma_a(x)-y|^{N+sp}} +  \frac{J(u(x),u(\sigma_a(y)))}{|x-\sigma_a(y)|^{N+sp}} + \frac{J(u(\sigma_a(x)),u(\sigma_a(y)))}{|\sigma_a(x)-\sigma_a(y)|^{N+sp}}
		\end{align}
		for a.e.\ $x, y \in \Sigma_a^+$, and characterize equality cases.
		Hereinafter in the proof, under $u$ and $v$ we understand some fixed representatives of corresponding equivalence classes from $\W$, so that \eqref{eq:8} makes sense for \textit{every} $x, y \in \Sigma_a^+$ at which $u$, $v$, and their reflections are defined, and we will be interested only in such $x,y$, while the $N$-measure of other points $x,y$ is zero anyway.			
		
		It is not hard to observe that
		\begin{equation}\label{eq:7}
			\frac{1}{|x-y|^{N+sp}} 
			= 
			\frac{1}{|\sigma_a(x)-\sigma_a(y)|^{N+sp}}
			> 
			\frac{1}{|\sigma_a(x)-y|^{N+sp}} 
			= 
			\frac{1}{|x-\sigma_a(y)|^{N+sp}}
		\end{equation}
		for every $x, y \in \Sigma_a^+$, since $\Sigma_a^+$ is defined by the strict inequality ``$>$'', see \eqref{eq:Sigma}. (In other words, the inequality in \eqref{eq:7} turns to equality if and only if $x \in H_a$ or $y \in H_a$.) 
		We will also need the following consequence of \eqref{eq:7}:
		\begin{equation}\label{eq:7x}
			\frac{1}{|x-y|^{N+sp}} 
			-
			\frac{1}{|x-\sigma_a(y)|^{N+sp}}
			= 
			\frac{1}{|\sigma_a(x)-\sigma_a(y)|^{N+sp}} 
			-
			\frac{1}{|\sigma_a(x)-y|^{N+sp}} > 0.
		\end{equation}
		
		Let us represent $\Sigma_a^+ \times \Sigma_a^+$ as a (nondisjoint) union of the following four subsets:
		\begin{align}
			A_{++} &= \{(x,y) \in \Sigma_a^+ \times \Sigma_a^+:~ 
			u(\sigma_a(x)) \geqslant u(x) 
			~\text{and}~
			u(\sigma_a(y)) \geqslant u(y) 
			\},\\
			A_{--} &= \{(x,y) \in \Sigma_a^+ \times \Sigma_a^+:~ 
			u(\sigma_a(x)) \leqslant u(x) 
			~\text{and}~
			u(\sigma_a(y)) \leqslant u(y) 
			\},\\
			\label{eq:Apm}
			A_{+-} &= \{(x,y) \in \Sigma_a^+ \times \Sigma_a^+:~ 
			u(\sigma_a(x)) > u(x) 
			~\text{and}~
			u(\sigma_a(y)) < u(y) 
			\},\\
			\label{eq:Amp}
			A_{-+} &= \{(x,y) \in \Sigma_a^+ \times \Sigma_a^+:~ 
			u(\sigma_a(x)) < u(x) 
			~\text{and}~
			u(\sigma_a(y)) > u(y) 
			\},
		\end{align}
		and investigate the inequality \eqref{eq:8} in each subset separately.
		
		$\bullet$
		Take any $(x,y) \in A_{++}$.
		By the definition \eqref{eq:Pol}, the polarization does not exchange values of $u$, so that
		\begin{equation}\label{eq:v=u1}
			v(x) = u(x), \quad v(\sigma_a(x)) = u(\sigma_a(x)),
			\quad \text{and} \quad 
			v(y) = u(y), \quad v(\sigma_a(y)) = u(\sigma_a(y)),
		\end{equation}
		and hence the inequality \eqref{eq:8} holds as equality for $(x,y) \in A_{++}$.
		
		$\bullet$
		Take any $(x,y) \in A_{--}$. 
		We see from \eqref{eq:Pol} that the polarization exchanges values of $u$, i.e.,
		\begin{equation}\label{eq:v=u2}
			v(x) = u(\sigma_a(x)), \quad v(\sigma_a(x)) = u(x),
			\quad \text{and} \quad 
			v(y) = u(\sigma_a(y)), \quad v(\sigma_a(y)) = u(y).
		\end{equation}
		Thus, using the equalities from \eqref{eq:7}, we rewrite the left-hand side of \eqref{eq:8} as 
		\begin{equation}\label{lem:prof:x}
			\frac{J(u(\sigma_a(x)),u(\sigma_a(y)))}{|\sigma_a(x)-\sigma_a(y)|^{N+sp}} +  \frac{J(u(x),u(\sigma_a(y)))}{|x-\sigma_a(y)|^{N+sp}} + 
			\frac{J(u(\sigma_a(x)),u(y))}{|\sigma_a(x)-y|^{N+sp}} +
			\frac{J(u(x),u(y))}{|x-y|^{N+sp}}.
		\end{equation}
		This expression coincides with the right-hand side of \eqref{eq:8}, i.e., \eqref{eq:8} holds as equality for $(x,y) \in A_{--}$.
		
		$\bullet$
		Take any $(x,y) \in A_{+-}$.
		In this case, \eqref{eq:Pol} implies that
		$$
		v(x) = u(x), \quad v(\sigma_a(x)) = u(\sigma_a(x)),
		\quad \text{and} \quad 
		v(y) = u(\sigma_a(y)), \quad v(\sigma_a(y)) = u(y),
		$$
		and we rewrite \eqref{eq:8} as
		\begin{align}
			\notag
			&\frac{J(u(x),u(\sigma_a(y)))}{|x-y|^{N+sp}} + \frac{J(u(\sigma_a(x)),u(\sigma_a(y)))}{|\sigma_a(x)-y|^{N+sp}} +  \frac{J(u(x),u(y))}{|x-\sigma_a(y)|^{N+sp}} + \frac{J(u(\sigma_a(x)),u(y))}{|\sigma_a(x)-\sigma_a(y)|^{N+sp}}\\
			&\leqslant
			\frac{J(u(x),u(y))}{|x-y|^{N+sp}} + \frac{J(u(\sigma_a(x)),u(y))}{|\sigma_a(x)-y|^{N+sp}} +  \frac{J(u(x),u(\sigma_a(y)))}{|x-\sigma_a(y)|^{N+sp}} + \frac{J(u(\sigma_a(x)),u(\sigma_a(y)))}{|\sigma_a(x)-\sigma_a(y)|^{N+sp}}.
			\label{eq:FFFF}
		\end{align}
		By rearranging the terms in \eqref{eq:FFFF}, we get
		\begin{align}
			&J(u(x),u(y))
			\left(
			\frac{1}{|x-y|^{N+sp}}
			-
			\frac{1}{|{x}-\sigma_a(y)|^{N+sp}}
			\right)
			\\
			&-
			J(u(\sigma_a(x)),u(y))
			\left(
			\frac{1}{|\sigma_a(x)-\sigma_a(y)|^{N+sp}}
			-
			\frac{1}{|\sigma_a(x)-y|^{N+sp}}
			\right)
			\\
			&-
			J(u(x),u(\sigma_a(y)))
			\left(
			\frac{1}{|x-y|^{N+sp}}
			-
			\frac{1}{|{x}-\sigma_a(y)|^{N+sp}}
			\right)
			\\
			&+
			J(u(\sigma_a(x)),u(\sigma_a(y)))
			\left(
			\frac{1}{|\sigma_a(x)-\sigma_a(y)|^{N+sp}}
			-
			\frac{1}{|\sigma_a(x)-y|^{N+sp}}
			\right) \geqslant 0.
			\label{eq:FFFF2}
		\end{align}
		Applying the equality in \eqref{eq:7x}, we rewrite \eqref{eq:FFFF2} as
		\begin{align}
			\left[
			J(u(x),u(y)) - J(u(\sigma_a(x)),u(y)) - J(u(x),u(\sigma_a(y))) + J(u(\sigma_a(x)),u(\sigma_a(y)))
			\right] \\
			\label{four_point_formula0}
			\times
			\left(
			\frac{1}{|x-y|^{N+sp}}
			-
			\frac{1}{|{x}-\sigma_a(y)|^{N+sp}}
			\right)
			\geqslant 0.
		\end{align}
		Thanks to the inequality in \eqref{eq:7x}, we conclude that \eqref{four_point_formula0} (and hence \eqref{eq:8}) is equivalent to the following \textit{four-point inequality}:
		\begin{equation}\label{four_point_formula}
			J(u(x),u(y)) - J(u(\sigma_a(x)),u(y)) - J(u(x),u(\sigma_a(y))) + J(u(\sigma_a(x)),u(\sigma_a(y)))
			\geqslant 0.
		\end{equation}
		This inequality is proved in Lemma~\ref{lem:four_point_lemma} by taking $a = u(x)$, $A = u(\sigma_a(x))$, $b = u(\sigma_a(y))$, $B = u(y)$.
		Moreover, Lemma~\ref{lem:four_point_lemma} implies that \eqref{four_point_formula} is strict if and only if $u(\sigma_a(x)) > 0$ or $u(y) > 0$.
		Consequently, if $(x,y) \in A_{+-}$ is such that $u(\sigma_a(x)) > 0$ or $u(y) > 0$, then the inequality \eqref{eq:8} holds with the strict sign. 
		We denote the set of such points as $A_{+-}^*$, i.e., 
		$$
		A_{+-}^* = \{(x,y) \in A_{+-}:~ u(\sigma_a(x)) > 0 ~~\text{or}~~ u(y) > 0\}.
		$$			
		For all $(x,y) \in A_{+-} \setminus A_{+-}^*$, \eqref{eq:8} holds with the equality sign.

		$\bullet$ 
		Take any $(x,y) \in A_{-+}$.
		Switching the notation $x \leftrightarrow y$, we arrive at the previous case, and hence deduce that if $(x,y) \in A_{-+}$ is such that $u(x) > 0$ or $u(\sigma_a(y)) > 0$, then the inequality \eqref{eq:8} holds with the strict sign, while for all other $(x,y) \in A_{-+}$, \eqref{eq:8} holds with the equality sign. 
		We denote 
		$$
		A_{-+}^* = \{(x,y) \in A_{-+}:~ u(x) > 0 ~~\text{or}~~ u(\sigma_a(y)) > 0\}.
		$$
		
		Combining all four cases, we conclude that \eqref{eq:8} is satisfied for all $x, y \in \Sigma_a^+$, which proves \eqref{eq:improvement}.
		It remains to describe the occurrence of equality in \eqref{eq:improvement}.
		We distinguish two cases:
		
		1) Let $|A_{+-}|_{2N}=0$, where $|\cdot|_{2N}$ stands for the $2N$-measure. (Equivalently, one can assume $|A_{-+}|_{2N}=0$, since the sets $A_{+-}$ and $A_{-+}$ are symmetric.)
		Consequently, we have either $u(\sigma_a(x)) \geqslant u(x)$
		for a.e.\ $x \in \Sigma_a^+$, or $u(\sigma_a(x)) \leqslant u(x)$
		for a.e.\ $x \in \Sigma_a^+$.
		This is the same as the alternative: either \eqref{eq:v=u1} holds for a.e.\ $x,y \in \Sigma_a^+$, or \eqref{eq:v=u2} holds for a.e.\ $x,y \in \Sigma_a^+$.
		In either case, we have equality in \eqref{eq:8} for a.e.\ $x,y \in \Sigma_a^+$, which results in the equality in \eqref{eq:improvement}, and \ref{prop:polarization:1} or \ref{prop:polarization:2} holds.

		2) Let $|A_{+-}|_{2N}>0$.
		For convenience, denote the left- and right-hand sides of \eqref{eq:8} as $I(v)$ and $I(u)$, respectively.
		With these notation, the inequality \eqref{eq:improvement} (via 			\eqref{eq:prop:mainineq1} and \eqref{eq:prop:decomp1}) reads as
		$$
		\int_{\Sigma_a^+}\int_{\Sigma_a^+}  (I(v) - I(u)) \,dxdy
		\leqslant
		0.
		$$
		Using the properties of the sets $A_{++}$, $A_{--}$, $A_{+-}$, $A_{-+}$, provided above, we have
		\begin{align}
			\int_{\Sigma_a^+}\int_{\Sigma_a^+}  (I(v) - I(u)) \,dxdy
			\label{eq:prop:lastinequ1}
			&=
			\iint_{A_{+-}^*} (I(v) - I(u)) \,dxdy
			+
			\iint_{A_{-+}^*}  (I(v) - I(u)) \,dxdy 
			\leqslant 0,
		\end{align}
		where equality takes place if and only if $|A_{+-}^*|_{2N}=0$ (and, equivalently, $|A_{-+}^*|_{2N}=0$).
		Assuming $|A_{+-}^*|_{2N}=0$, we get 
		\begin{equation}\label{eq:apm1}
			0 \geqslant u(\sigma_a(x)) > u(x)
			\quad \text{and} \quad
			u(\sigma_a(y)) < u(y) \leqslant 0
			\quad \text{for a.e.}~ (x,y) \in A_{+-}.
		\end{equation}
		Suppose now that there exists $x \in \Sigma_a^+$ such that
		\begin{equation}\label{eq:uvx1}
			u(\sigma_a(x)) > 0
			\quad \text{and} \quad
			u(\sigma_a(x)) > u(x).
		\end{equation}
		If there exists a point $y \in \Sigma_a^+$ such that $u(\sigma_a(y)) < u(y)$, 
		then $(x,y) \in A_{+-}$, 
		and hence the $2N$-measure of such points $(x,y)$ is zero in view of \eqref{eq:apm1}. 
		Thus, if \eqref{eq:uvx1} holds on a subset of $\Sigma_a^+$ of positive $N$-measure, then $u(\sigma_a(y)) \geqslant u(y)$ for a.e.\ $y \in \Sigma_a^+$, which contradicts the assumption $|A_{+-}|_{2N}>0$.
		Analogously, we get a contradiction if
		\begin{equation}\label{eq:uvx2}
			u(y) > 0
			\quad \text{and} \quad
			u(\sigma_a(y)) < u(y)
		\end{equation}
		hold on a subset of $\Sigma_a^+$ of positive $N$-measure.
		Therefore, combining these two facts, we conclude that for a.e.\ $x \in \Sigma_a^+$ such that $u(\sigma_a(x)) > 0$, we have $0 < u(\sigma_a(x)) \leqslant u(x)$, and for a.e.\ $y \in \Sigma_a^+$ such that $u(y) > 0$, we have $0 < u(y) \leqslant u(\sigma_a(y))$.
		Consequently, by redenoting $y$ to $x$, we deduce that for a.e.\ $x \in \Sigma_a^+$ such that $u(x)>0$ or $u(\sigma_a(x)) > 0$, we have $u(\sigma_a(x)) = u(x)$.
		This is exactly the case \ref{prop:polarization:3}.
		
		The inequality~\eqref{eq:improvement2} with equality cases can be established by noting that $u^- = -(-u)^+$. 
	\end{proof}
	
	In Figure~\ref{fig:proof1}, we depict a function $u: \mathbb{R} \to \mathbb{R}$ and its polarization $P_0u$ which deliver equality in \eqref{eq:improvement} under the assumption~\ref{prop:polarization:3} of Proposition~\ref{prop:polarization}, while neither the assumption~\ref{prop:polarization:1} nor~\ref{prop:polarization:2} holds.

	\begin{figure}[!h]
		\begin{center}
			\begin{subfigure}{0.49\textwidth}
				\includegraphics[width=\linewidth]{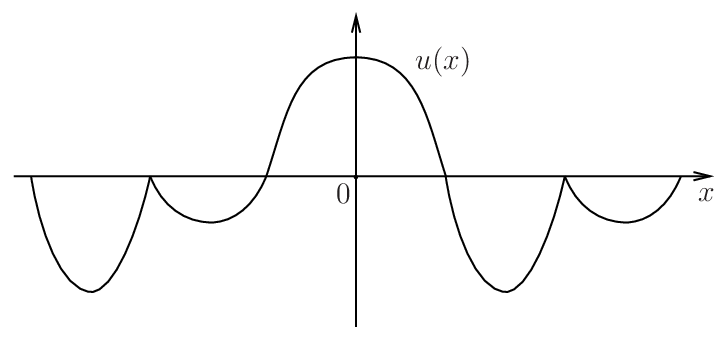}
			\end{subfigure}
			\hspace*{\fill}
			\begin{subfigure}{0.49\textwidth}
				\includegraphics[width=\linewidth]{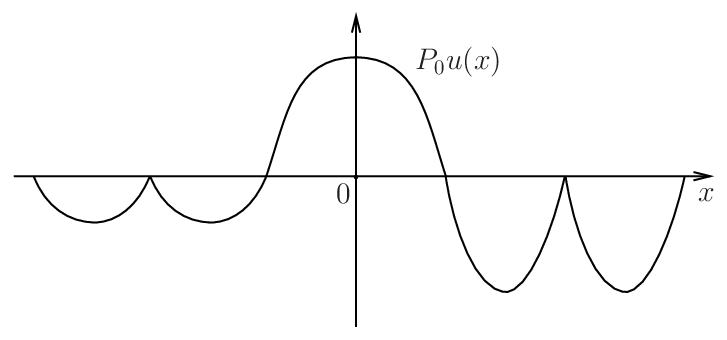}
			\end{subfigure}
		\end{center}
		\caption{A function $u: \mathbb{R} \to \mathbb{R}$ and its polarization $P_0u$ for which \eqref{eq:improvement} is an equality under the assumption~\ref{prop:polarization:3} of Proposition~\ref{prop:polarization}, but the assumptions~\ref{prop:polarization:1} and~\ref{prop:polarization:2} are not satisfied.} \label{fig:proof1}
	\end{figure}
	
	As a simple complementary fact to Proposition~\ref{prop:polarization}, we note that
	$$
	\langle D[u]_p^p, u^\pm \rangle \geqslant 0
	\quad 
	\text{for any}~ u \in \W,
	$$
	as it follows from the pointwise estimate (cf.~\cite[Eq.~(14)]{FranPal})
	$$
	|u(x)-u(y)|^{p-2}(u(x)-u(y))(u^\pm(x)-u^\pm(y)) 
	\geqslant 
	|u^\pm(x)-u^\pm(y)|^{p},
	\quad x,y \in \mathbb{R}^N.
	$$
	
	\begin{remark}\label{rem:equality}
		Summing \eqref{eq:improvement} and \eqref{eq:improvement2}, we obtain \eqref{eq:polineq1} and see that equality holds in $[P_a u]_p
		\leqslant 
		[u]_p$ 
		if and only either $u(x)=P_a u(x)$ for a.e.\ $x \in \mathbb{R}^N$ or $u(\sigma_a(x))=P_a u(x)$ for a.e.\ $x \in \mathbb{R}^N$. 
		In particular, when both \eqref{eq:improvement} and \eqref{eq:improvement2} are equalities, we have the same alternative. 
	\end{remark}
	
	\begin{remark}
		Using Lemma~\ref{lem:appendix:inequality}, one can explicitly estimate the deficit in \eqref{eq:improvement} and \eqref{eq:improvement2}.
		We also note that Proposition~\ref{prop:polarization} evidently holds for the polarization $\widetilde{P}_a$.
	\end{remark}
	
	\begin{remark}
		Since the proof of Proposition~\ref{prop:polarization} is largely based on the pointwise analysis, the particular choice of the kernel $|x-y|^{-(N+sp)}$ can be generalized to any 
		kernel $K(x,y)$ satisfying the following counterpart of \eqref{eq:7}:
		$$
		K(x,y)
		= 
		K(\sigma_a(x),\sigma_a(y))
		> 
		K(\sigma_a(x),y)
		= 
		K(x,\sigma_a(y))
		\quad \text{for every}~ x, y \in \Sigma_a^+.
		$$
	\end{remark}

	\medskip
	The following results are useful for the application of Proposition~\ref{prop:polarization} to functions from $\Wo$, cf.\ \cite[Corollary~5.1]{brocksol}.
	\begin{lemma}\label{lem:Pa-Sobolev}
		Let  $a \in \mathbb{R}$ and $u \in \Wo$ be a nonnegative function.
		Then $P_a u \in \widetilde{W}_0^{s,p}(P_a\Omega)$.
	\end{lemma}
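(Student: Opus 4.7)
The plan is to reduce to the case $u \in C_0^\infty(\Omega)$ with $u \geqslant 0$ by density in $\Wo$, establish the conclusion there by a direct support-and-approximation argument, and finally pass to the limit using continuity of the polarization operator $P_a$ on the nonnegative cone of $\W$.

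First I would produce a sequence $(u_n) \subset C_0^\infty(\Omega)$ with $u_n \geqslant 0$ and $u_n \to u$ in $\W$. This is achieved by extending $u$ by zero to $\mathbb{R}^N$, convolving with a nonnegative radial mollifier, and multiplying by a cutoff function supported strictly inside $\Omega$; a standard diagonal argument and the nonnegativity of the mollifier ensure that the approximants are smooth, compactly supported in $\Omega$, and nonnegative, and converge to $u$ in the $\|\cdot\|_p + [\,\cdot\,]_p$ norm. Next, for each such $u_n$, I claim $P_a u_n \in \widetilde{W}_0^{s,p}(P_a\Omega)$. Indeed, $P_a u_n$ is continuous on $\mathbb{R}^N$ (the expressions $\min\{u_n(x),u_n(\sigma_a(x))\}$ and $\max\{u_n(x),u_n(\sigma_a(x))\}$ agree on $H_a$), nonnegative, and supported in $\overline{P_a\Omega}$: if $x \in \Sigma_a^+ \setminus P_a\Omega$ then $x \notin \Omega$ or $\sigma_a(x) \notin \Omega$, so one of $u_n(x), u_n(\sigma_a(x))$ vanishes, and by $u_n \geqslant 0$ the minimum is $0$; the cases $x \in \Sigma_a^-$ and $x \in H_a$ are analogous. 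A mollification combined with a cutoff supported in $P_a\Omega$ then yields a sequence in $C_0^\infty(P_a\Omega)$ converging to $P_a u_n$ in $\W$.

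It remains to pass to the limit in $n$. For this I would use that $P_a$ is continuous on the nonnegative cone of $\W$. The $L^p$ part follows from a pointwise identity: at every pair $\{x,\sigma_a(x)\}$ the polarization either preserves or swaps the two values, giving $|P_a u_n - P_a u|(x)^p + |P_a u_n - P_a u|(\sigma_a(x))^p \leqslant |u_n-u|(x)^p + |u_n-u|(\sigma_a(x))^p$ after a rearrangement comparison. The seminorm part is handled by integrating a four-point inequality of exactly the type used in the proof of Proposition~\ref{prop:polarization}, applied to $u_n - u$: rearranging the kernel $|x-y|^{-(N+sp)}$ over the four configurations $(x,y),(\sigma_a(x),y),(x,\sigma_a(y)),(\sigma_a(x),\sigma_a(y))$ and exploiting \eqref{eq:7}--\eqref{eq:7x} gives $[P_a u_n - P_a u]_p \leqslant C[u_n - u]_p$. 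Hence $P_a u_n \to P_a u$ in $\W$, and since $\widetilde{W}_0^{s,p}(P_a\Omega)$ is closed in $\W$, we conclude $P_a u \in \widetilde{W}_0^{s,p}(P_a\Omega)$.

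The main obstacle is the continuity statement for $P_a$ in the Gagliardo seminorm; unlike the $L^p$ case, this is not a purely two-point rearrangement but requires the four-point analysis already present in Proposition~\ref{prop:polarization}. A more direct alternative, avoiding this technical step, is to apply the mollification-plus-cutoff approximation of $P_a u_n$ in $C_0^\infty(P_a\Omega)$ together with a diagonal extraction: since $\|P_a u_n\|_p = \|u_n\|_p \to \|u\|_p$ and $[P_a u_n]_p \leqslant [u_n]_p$ is bounded, a weak-limit argument in the reflexive space $\W$ combined with a.e.\ convergence of $P_a u_n$ to $P_a u$ identifies the limit and secures membership in the closed subspace $\widetilde{W}_0^{s,p}(P_a\Omega)$.
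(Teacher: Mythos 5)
Your overall strategy is the same as the paper's: reduce to nonnegative smooth compactly supported $u_n$, observe that $P_a u_n$ is supported compactly inside $P_a\Omega$ and hence lies in $\widetilde{W}_0^{s,p}(P_a\Omega)$, and then pass to the limit. Your second "weak-limit plus $L^p$-identification" option for the limit step is in fact exactly what the paper does (boundedness of $[P_a u_n]_p$ via \eqref{eq:polineq1}, compactness of the embedding from \cite[Theorem~2.7]{BLP}, and $P_a u_n \to P_a u$ in $L^p$ from \cite[Theorem~3.1]{brocksol}). However, there are two genuine problems with the proposal.

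\emph{Construction of nonnegative approximants.} You propose to build the nonnegative sequence $(u_n)\subset C_0^\infty(\Omega)$ by mollifying $u$ and multiplying by a cutoff supported strictly inside $\Omega$. This does not work for a general bounded open $\Omega$: when $\mathrm{supp}\,u$ touches $\partial\Omega$, the products $\eta_n u$ need not converge to $u$ in the Gagliardo seminorm (this is precisely the kind of boundary-regularity issue the definition of $\Wo$ as a completion circumvents). The paper instead starts from an arbitrary approximating sequence $\{v_n\}\subset C_0^\infty(\Omega)$ (available by definition of $\Wo$), passes to the Lipschitz functions $v_n^+$, and then handles the possible lack of \emph{strong} convergence via Mazur's lemma; this is Lemma~\ref{lem:positive-part} and it is a real ingredient, not a routine step.

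\emph{Lipschitz continuity of $P_a$ in $[\,\cdot\,]_p$.} Your first limiting option relies on an estimate of the form $[P_a u_n - P_a u]_p \leqslant C[u_n-u]_p$ obtained by "applying the four-point inequality to $u_n-u$." This step does not go through: $P_a$ is not linear, so $P_a u_n - P_a u$ is not the polarization of $u_n - u$, and Proposition~\ref{prop:polarization} only compares $[P_a w]_p$ with $[w]_p$ for a single $w$ — it says nothing about differences. In contrast to the $L^p$ case (where a pointwise two-point rearrangement bound does give nonexpansiveness), no such pointwise bound survives the nonlocal double integral. You already flag this as the "main obstacle," and rightly so; drop this route and keep only your second alternative, adding the missing $L^p$-convergence ingredient $P_a u_n \to P_a u$ from \cite{brocksol} to identify the limit.
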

	\begin{proof}
		Since $u$ is nonnegative, Lemma~\ref{lem:positive-part} gives a sequence $\{u_n\} \subset C_0^\infty(\Omega)$ of nonnegative functions converging to $u$ in $\Wo$. 
		It follows from \cite[Theorem~3.3 and Lemma~5.1]{brocksol} that each $P_a u_n$ is a nonnegative Lipschitz function with compact support in $P_a \Omega$. 
		Therefore, $P_a u_n \in \widetilde{W}_0^{s,p}(P_a \Omega)$ by Remark~\ref{rem:W-Lip}.
		
		By \eqref{eq:polineq1} and the convergence of $\{u_n\}$ in $\Wo$, we have $[P_a u_n]_p \leqslant [u_n]_p \leqslant C$ for some $C>0$ and all $n$.
		Thus, \cite[Theorem~2.7]{BLP} implies that $\{P_a u_n\}$ converges in $L^p(P_a\Omega)$ to a function $v \in \widetilde{W}_0^{s,p}(P_a\Omega)$, up to a subsequence. 
		On the other hand, 
		\cite[Theorem~3.1]{brocksol} yields
		$P_a u_n \to P_a u$ in $L^p(P_a \Omega)$.
		It is then clear that $v = P_au \in \widetilde{W}_0^{s,p}(P_a\Omega)$.
	\end{proof}

	It is not hard to see that Lemma~\ref{lem:Pa-Sobolev} is also valid for the polarization $\widetilde{P}_a$.
	In particular, applying Lemma~\ref{lem:Pa-Sobolev} to $u^+$ (with $P_a$) and to $-u^-$ (with $\widetilde{P}_a$), and using \eqref{eq:polarization_positive_negative2}, we get the following results.
	\begin{corollary}\label{cor:Pa-Sobolev}
		Let  $a \in \mathbb{R}$ and $u \in \Wo$.
		Then $P_a u \in \widetilde{W}_0^{s,p}(P_a\Omega \cup \widetilde{P}_a \Omega) = \widetilde{W}_0^{s,p}(\Omega \cup \sigma_a(\Omega))$.
	\end{corollary}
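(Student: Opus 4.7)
The plan is to use the decomposition $u = u^+ + u^-$ together with the polarization identity \eqref{eq:polarization_positive_negative2}, exactly as the sentence preceding the corollary suggests. First, since $u^+ \geqslant 0$ and $u^+ \in \Wo$, Lemma~\ref{lem:Pa-Sobolev} applies directly to give $P_a u^+ \in \widetilde{W}_0^{s,p}(P_a\Omega)$. Next, noting that $-u^- \geqslant 0$ and $-u^- \in \Wo$ as well, I would invoke the $\widetilde{P}_a$-version of Lemma~\ref{lem:Pa-Sobolev} (whose validity is remarked explicitly just before the corollary; the proof is identical after interchanging the roles of $\Sigma_a^+$ and $\Sigma_a^-$) to deduce $\widetilde{P}_a(-u^-) \in \widetilde{W}_0^{s,p}(\widetilde{P}_a\Omega)$.

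Next, I would combine these via \eqref{eq:polarization_positive_negative2}, which reads $P_a u = P_a u^+ - \widetilde{P}_a(-u^-)$. Both summands belong to the larger space $\widetilde{W}_0^{s,p}(P_a\Omega \cup \widetilde{P}_a\Omega)$ after trivial extension by zero (since $P_a\Omega \subset P_a\Omega \cup \widetilde{P}_a\Omega$ and likewise for $\widetilde{P}_a\Omega$, and the zero extension of a function in $\widetilde{W}_0^{s,p}(U)$ to a larger open set still lies in the corresponding space by the very definition of $\widetilde{W}_0^{s,p}$ as the closure of $C_0^\infty$). Hence $P_a u \in \widetilde{W}_0^{s,p}(P_a\Omega \cup \widetilde{P}_a\Omega)$.

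It remains to verify the set identity $P_a\Omega \cup \widetilde{P}_a\Omega = \Omega \cup \sigma_a(\Omega)$. This is a direct check from the definitions of $P_a\Omega$ and $\widetilde{P}_a\Omega$, region by region: in $\Sigma_a^+$ the union equals $(\Omega \cap \sigma_a(\Omega)) \cup (\Omega \cup \sigma_a(\Omega)) = \Omega \cup \sigma_a(\Omega)$; on $H_a$ it reduces to $\Omega$, which coincides with $\Omega \cup \sigma_a(\Omega)$ there (as $H_a$ is fixed by $\sigma_a$); and in $\Sigma_a^-$ the same computation as in $\Sigma_a^+$ works by symmetry.

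I do not anticipate a genuine obstacle: the step requiring the most care is simply making sure the zero-extension embedding $\widetilde{W}_0^{s,p}(U) \hookrightarrow \widetilde{W}_0^{s,p}(V)$ for $U \subset V$ is used correctly, but this is immediate from the $C_0^\infty$-completion definition recalled in the introduction. The proof is therefore essentially a bookkeeping exercise built on Lemma~\ref{lem:Pa-Sobolev} and the identity \eqref{eq:polarization_positive_negative2}.
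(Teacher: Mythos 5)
Your proposal is correct and follows the same route the paper indicates in the sentence immediately preceding the corollary: apply Lemma~\ref{lem:Pa-Sobolev} to $u^+$ with $P_a$, apply its $\widetilde{P}_a$-analogue to $-u^-$, and combine via \eqref{eq:polarization_positive_negative2}, together with the elementary set identity $P_a\Omega \cup \widetilde{P}_a\Omega = \Omega \cup \sigma_a(\Omega)$. Nothing is missing; the zero-extension embedding between $\widetilde{W}_0^{s,p}$-spaces of nested open sets that you flag is indeed immediate from the $C_0^\infty$-completion definition.
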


	\begin{lemma}\label{lem:Pa-Sobolev2}
		Let $\{a_n\} \subset \mathbb{R}$ be a sequence converging to $a \in \mathbb{R}$. 
		Let $u \in \Wo \cap C(\Omega)$ be a nonnegative function such that each $P_{a_n} (\mathrm{supp}\, u^+)$ is contained in $\Omega$.
		Then $P_{a_n} u \in \Wo$ for all $n$, and $P_{a} u \in \Wo$.
	\end{lemma}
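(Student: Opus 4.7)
The plan is to split the statement in two: first establish $P_{a_n} u \in \Wo$ for each fixed $n$ by combining Lemma~\ref{lem:Pa-Sobolev} with a cutoff argument exploiting the compact containment $P_{a_n}(\mathrm{supp}\,u^+) \subset \Omega$; then deduce $P_a u \in \Wo$ by a weak-compactness argument in $\Wo$, using the polarization bound \eqref{eq:polineq1} and the $L^p$-continuity of the map $a \mapsto P_a u$.

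For the first part, since $u \geqslant 0$ we have $u^+ = u$, and Lemma~\ref{lem:Pa-Sobolev} already yields $P_{a_n} u \in \widetilde{W}_0^{s,p}(P_{a_n}\Omega) \subset \W$. From the pointwise definition~\eqref{eq:Pol} together with $u \geqslant 0$, I would check that $\mathrm{supp}\,P_{a_n} u \subset P_{a_n}(\mathrm{supp}\,u)$: on $\Sigma_{a_n}^+$ the value $\min(u(x), u(\sigma_{a_n}(x)))$ is positive only if both arguments are, while on $\Sigma_{a_n}^-$ the value $\max(u(x), u(\sigma_{a_n}(x)))$ is positive only if at least one argument is. By hypothesis this support is a compact subset of $\Omega$, so a cutoff $\eta \in C_0^\infty(\Omega)$ with $\eta \equiv 1$ on a neighborhood of it satisfies $\eta \cdot P_{a_n} u = P_{a_n} u$. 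Hence $P_{a_n} u$ belongs to $\W$ with compact support strictly inside $\Omega$, and standard mollifications $(P_{a_n} u) \ast \psi_\varepsilon$ furnish a sequence in $C_0^\infty(\Omega)$ converging to $P_{a_n} u$ in $\W$, establishing $P_{a_n} u \in \Wo$.

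For the second part, the bound $[P_{a_n} u]_p \leqslant [u]_p$ from \eqref{eq:polineq1} shows that $\{P_{a_n} u\}$ is bounded in the reflexive space $\Wo$. To identify a potential weak limit with $P_a u$, I would first establish $P_{a_n} u \to P_a u$ in $L^p(\mathbb{R}^N)$: since $\sigma_{a_n}$ differs from $\sigma_a$ by a translation along $e_1$ of size $O(|a_n - a|)$, translation continuity in $L^p$ gives $u \circ \sigma_{a_n} \to u \circ \sigma_a$ in $L^p(\mathbb{R}^N)$, and the $1$-Lipschitz nature of $\min$ and $\max$ combined with $|\chi_{\Sigma_{a_n}^\pm} - \chi_{\Sigma_a^\pm}| \to 0$ in measure produces the claimed $L^p$-convergence via dominated convergence. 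Extracting a weakly convergent subsequence in $\Wo$ and invoking the compact embedding $\Wo \hookrightarrow L^p(\Omega)$, the weak $\Wo$-limit agrees a.e.\ with $P_a u$, so $P_a u \in \Wo$.

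I expect the main technical point to be the careful verification of the $L^p$-continuity of $a \mapsto P_a u$, since both the defining formula and the half-spaces $\Sigma_a^\pm$ depend on $a$; the remaining ingredients—reflexivity, compact embedding, and the cutoff/mollification in the first part—are essentially standard once the support analysis is in place.
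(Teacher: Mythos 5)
Your proposal is correct and follows essentially the same route as the paper: get $P_{a_n}u \in \W$, use the compact containment $\mathrm{supp}\,(P_{a_n}u)\subset P_{a_n}(\mathrm{supp}\,u^+)\subset\Omega$ to conclude $P_{a_n}u\in\Wo$ by mollification, then pass to the limit via the uniform Gagliardo bound \eqref{eq:polineq1}, the compact embedding $\Wo\hookrightarrow L^p(\Omega)$, and $L^p$-continuity of $a\mapsto P_a u$. The only cosmetic differences are that the paper invokes \cite[Lemma~5.2-1]{brocksol} for the $L^p$-continuity rather than reproving it (your sketch is fine, but mind that the symmetric difference of the half-spaces has finite measure only after intersecting with the bounded support of $u$), and that your cutoff $\eta$ is redundant once you know the support of $P_{a_n}u$ is already compact in $\Omega$.
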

	\begin{proof}
		In view of \eqref{eq:weak-pol} and \eqref{eq:polineq1}, we have $P_{a_n} u \in \W$ for any $n$.
		Since the closed set $P_{a_n} (\mathrm{supp}\, u^+)$ is a subset of $\Omega$ and $u$ is nonnegative, 
		we apply mollification arguments (see \cite[Lemma~11]{FSV}) to conclude that each $P_{a_n} u$ can be approximated by $C_0^\infty(\Omega)$-functions in the norm of $\W$.
		That is, $P_{a_n} u \in \Wo$ for any $n$.
		
		The inequality \eqref{eq:polineq1} shows that the sequence $\{P_{a_n} u\}$ is bounded in $\Wo$, and hence it converges in $L^p(\Omega)$ to some $v \in \Wo$, up to a subsequence (see \cite[Theorem~2.7]{BLP}).
		On the other hand, \cite[Lemma~5.2-1]{brocksol} gives
		$P_{a_n} u \to P_a u$ in $L^p(\Omega)$.
		Therefore, we conclude that $v = P_a u \in \Wo$.
	\end{proof}
	
	As above, it is not hard to observe that Lemma~\ref{lem:Pa-Sobolev2} remains valid for the polarization $\widetilde{P}_a$.

	\section[Characterizations]{Characterization of second eigenfunctions and LENS}\label{section:aux2}
	
	In this section, we characterize second eigenfunctions and least energy nodal solutions (LENS) of \eqref{eq:D} by certain integral inequalities. 
	These results will be needed for the application of Proposition~\ref{prop:polarization} in the proof of Theorem~\ref{thm:payne}.

	\subsection{Second eigenfunctions}\label{sec:secondeignvalue}
	We state three closely related results. 
	These results extend \cite[Lemma~2.1]{BBDG}, but the present arguments are different due to the general nonlinear settings; see also \cite{DP,DIS} for developments.
	
	Let us explicitly note that any second eigenfunction $u$ satisfies the following equalities:
	\begin{equation}\label{eq:1x}
		\lambda_2 \int_\Omega |u^+|^p \, dx 
		= 
		\frac{1}{p}\langle D[u]_p^p, u^+ \rangle
		\quad \text{and}\quad 
		\lambda_2 \int_\Omega |u^-|^p \, dx 
		= 
		\frac{1}{p}\langle D[u]_p^p, u^- \rangle.
	\end{equation}

	\begin{proposition}\label{prop:lambda2-characterization}
		Assume that there exists a function $v \in \Wo$ such that $v^\pm \not\equiv 0$  in $\Omega$ and 
		\begin{equation}\label{eq:1}
			\lambda_2 \int_\Omega |v^+|^p \, dx 
			\geqslant 
			\frac{1}{p}\langle D[v]_p^p, v^+ \rangle
			\quad \text{and}\quad 
			\lambda_2 \int_\Omega |v^-|^p \, dx 
			\geqslant 
			\frac{1}{p}\langle D[v]_p^p, v^- \rangle.
		\end{equation}
		Then $v$ is a second eigenfunction and equalities hold in \eqref{eq:1}.
	\end{proposition}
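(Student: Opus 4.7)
The strategy is to exploit the minimax characterization \eqref{eq:lambda2} by constructing an explicit odd continuous loop $\tilde\gamma\colon S^1 \to \mathcal{S}$ on which $[\cdot]_p^p$ is uniformly bounded above by $\lambda_2$. Set $\gamma(\theta) = \cos\theta\,v^+ - \sin\theta\,v^-$ and $\tilde\gamma(\theta) = \gamma(\theta)/\|\gamma(\theta)\|_p$. Since $v^\pm \not\equiv 0$ and their supports are disjoint, $\|\gamma(\theta)\|_p^p = |\cos\theta|^p\|v^+\|_p^p + |\sin\theta|^p\|v^-\|_p^p > 0$ for every $\theta$, so $\tilde\gamma$ is a continuous odd map into $\mathcal{S}$. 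Consequently $\tilde\gamma(S^1) \in \mathcal{G}_2$, and \eqref{eq:lambda2} already gives $\sup_\theta [\tilde\gamma(\theta)]_p^p \geq \lambda_2$.

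The analytic core is the pointwise inequality
\begin{align*}
&\bigl|c(v^+(x)-v^+(y)) + s(v^-(x)-v^-(y))\bigr|^p \\
&\qquad \leq |v(x)-v(y)|^{p-2}(v(x)-v(y))\bigl[|c|^p(v^+(x)-v^+(y)) + |s|^p(v^-(x)-v^-(y))\bigr],
\end{align*}
valid for all $c,s\in\mathbb{R}$ and a.e.\ $x,y\in\mathbb{R}^N$. This is verified by splitting according to the signs of $v(x)$ and $v(y)$: whenever they have the same sign or at least one vanishes, both sides coincide; in the ``cross'' regions $v(x)>0>v(y)$ (and its symmetric counterpart), writing $a=v^+(x)\geq 0$ and $b=|v^-(y)|\geq 0$, it reduces to $|ca + sb|^p \leq (a+b)^{p-1}(|c|^p a + |s|^p b)$, a consequence of the triangle inequality together with the convexity of $t\mapsto t^p$ on $[0,\infty)$ applied to weights $a/(a+b)$ and $b/(a+b)$. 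Integrating and invoking \eqref{eq:1} yields
\[
[cv^+ + sv^-]_p^p \leq \lambda_2\bigl(|c|^p\|v^+\|_p^p + |s|^p\|v^-\|_p^p\bigr) = \lambda_2\|cv^+ + sv^-\|_p^p
\]
for all $c,s\in\mathbb{R}$; in particular $[\tilde\gamma(\theta)]_p^p \leq \lambda_2$ for every $\theta$. Combined with the lower bound, $\sup_\theta [\tilde\gamma(\theta)]_p^p = \lambda_2$, and this supremum is attained at some $\theta^*\in S^1$ by compactness.

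At $\theta^*$ equality must hold throughout the chain. Tracing the equality cases in the triangle and Jensen steps, the alternative of a ``degenerate cross region'' would force $v^+\equiv 0$ or $v^-\equiv 0$ on a set of positive measure and is ruled out by $v^\pm \not\equiv 0$; thus $\cos\theta^* = -\sin\theta^*$, so $\tilde\gamma(\theta^*) = \pm v/\|v\|_p$. Since then $|\cos\theta^*|, |\sin\theta^*| > 0$, equality in the second step of the chain forces both inequalities in \eqref{eq:1} to be equalities, yielding the second assertion. To conclude that $v$ is itself a second eigenfunction, I would apply the equivariant deformation lemma for the $C^1$ even functional $[\cdot]_p^p$ on the $C^1$ sphere $\mathcal{S}$ (the required Palais--Smale-type compactness coming from the compact embedding $\Wo \hookrightarrow L^p(\Omega)$): if $v/\|v\|_p$ were not a critical point, an equivariant perturbation of $\tilde\gamma$ localised near $\pm v/\|v\|_p$ would strictly lower the values at these two sup-attainers while leaving the rest of the loop (where $[\tilde\gamma(\theta)]_p^p < \lambda_2$ by a uniform gap, by continuity on the compact $S^1$) below $\lambda_2$, producing an element of $\mathcal{G}_2$ violating \eqref{eq:lambda2}. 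Hence $v$ solves $(-\Delta)^s_p v = \lambda_2 |v|^{p-2}v$. The main obstacle is the equality analysis of the pointwise inequality, where one must carefully exclude the degenerate cases using $v^\pm \not\equiv 0$, and control the deformation argument in the nonlocal nonlinear setting.
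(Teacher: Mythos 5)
Your proposal follows essentially the same route as the paper: the odd normalized loop $\theta\mapsto(\cos\theta\,v^+-\sin\theta\,v^-)/\|\cdot\|_p$ is exactly the paper's map $h(\alpha,\beta)$ on $S^1$, the pointwise estimate you prove by splitting into ``cross'' regions is Lemma~\ref{lem:appendix:inequality} (from \cite{BrPar}), and the equality analysis forcing $\cos\theta^*=-\sin\theta^*$ via $v^\pm\not\equiv 0$ is the same as the paper's argument that $|K|_{2N}>0$ forces $\alpha_0=\beta_0$. The only structural difference is at the very end: rather than sketching a local equivariant deformation argument to conclude that the unique sup-attainer $\pm v/\|v\|_p$ is a critical point, the paper simply cites \cite[Proposition~2.8]{cuesta} to produce a critical point on the loop at level $\lambda_2$ (and identifies it with $v$ via the equality analysis), which is cleaner and avoids having to control the pseudo-gradient flow on the $C^1$ manifold $\mathcal{S}$ by hand.
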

	\begin{proof}
		The first part of our arguments is reminiscent of the proof of \cite[Proposition~4.2]{BrPar}, where the authors establish that there is no eigenvalue between $\lambda_1$ and $\lambda_2$. 
		Since the assertion of the present proposition is of different nature and we use different notation, we provide details.
		Taking any $(\alpha,\beta) \in S^1$, multiplying the inequalities in \eqref{eq:1} by $|\alpha|^p$ and $|\beta|^p$, respectively, and then adding them, we get
		\begin{equation}\label{eq:proof:lambda2>1}
			\lambda_2 
			\geqslant
			\frac{	\frac{1}{p}\langle D[v]_p^p, |\alpha|^p v^+ + |\beta|^p v^-\rangle}
			{|\alpha|^p \int_\Omega |v^+|^p \, dx + |\beta|^p \int_\Omega |v^-|^p \, dx}.
		\end{equation}
		Denoting
		\begin{equation}\label{eq:lambda2-proof-char0}
			U(x,y) = v^+(x) - v^+(y)
			\quad \text{and} \quad 
			V(x,y) = -(v^-(x) - v^-(y)),
		\end{equation}
		we observe that
		\begin{equation}\label{eq:lambda2-proof-char1}
			v(x)-v(y) = (v^+(x) - v^+(y)) + (v^-(x) - v^-(y))
			=
			U(x,y) - V(x,y),
		\end{equation}
		and hence
		\begin{equation}\label{eq:proof:main-ineq0x}
			\frac{1}{p}\langle D[v]_p^p, |\alpha|^p v^+ + |\beta|^p v^-\rangle
			=
			\int_{\mathbb{R}^N} \int_{\mathbb{R}^N}
			\frac{|U-V|^{p-2} (U-V)(|\alpha|^p U - |\beta|^p V)}{|x-y|^{N+ps}} \,dxdy,
		\end{equation}
		cf.\ \eqref{eq:Dupxi}.
		It is not hard to see that $U V \leqslant 0$ a.e.\ in $\mathbb{R}^N \times \mathbb{R}^N$.
		Using the pointwise inequality from Lemma~\ref{lem:appendix:inequality} (which is essentially contained in the proof of \cite[Proposition~4.2]{BrPar}), we obtain
		\begin{equation}\label{eq:proof:main-ineq1}
			\int_{\mathbb{R}^N} \int_{\mathbb{R}^N}
			\frac{|U-V|^{p-2} (U-V)(|\alpha|^p U - |\beta|^p V)}{|x-y|^{N+ps}} \,dxdy
			\geqslant
			\int_{\mathbb{R}^N} \int_{\mathbb{R}^N}
			\frac{|\alpha U- \beta V|^{p}}{|x-y|^{N+ps}} \,dxdy
			=
			[\alpha v^+ +\beta v^-]_p^p.
		\end{equation}
		Thus, we deduce from \eqref{eq:proof:lambda2>1}, \eqref{eq:proof:main-ineq0x}, and \eqref{eq:proof:main-ineq1} that
		\begin{equation}\label{eq:proof:main-ineq1x}
			\lambda_2 \geqslant \frac{[\alpha v^+ +\beta v^-]_p^p}{|\alpha|^p \int_\Omega |v^+|^p \, dx + |\beta|^p \int_\Omega |v^-|^p \, dx}
			\quad \text{for any}~ (\alpha,\beta) \in S^1. 
		\end{equation}
		Consider a continuous odd function $h: S^1 \mapsto \Wo$ defined as
		$$
		h(\alpha,\beta) 
		= 
		\frac{\alpha v^+ + \beta v^-}{(|\alpha|^p \int_\Omega |v^+|^p \, dx + |\beta|^p \int_\Omega |v^-|^p \, dx)^\frac{1}{p}}.
		$$
		Clearly, we have $\|h(\alpha,\beta)\|_{p}=1$, 
		that is, $h: S^1 \mapsto \mathcal{S}$, where $\mathcal{S}$ is the unit $L^p(\Omega)$-sphere in $\Wo$ (see~\eqref{eq:Slp}), and the estimate \eqref{eq:proof:main-ineq1x} reads as
		\begin{equation}\label{eq:proof:main-ineq2}
			\frac{[\alpha v^+ +\beta v^-]_p^p}
			{|\alpha|^p \int_\Omega |v^+|^p \, dx + |\beta|^p \int_\Omega |v^-|^p \, dx} 
			\equiv
			[h(\alpha,\beta)]_p^p
			\leqslant 
			\lambda_2
			\quad \text{for any}~ (\alpha,\beta) \in S^1.
		\end{equation}	
		At the same time, the definition \eqref{eq:lambda2} of $\lambda_2$ implies that
		$$
		\lambda_2 \leqslant \max_{(\alpha,\beta) \in S^1} [h(\alpha,\beta)]_p^p.
		$$
		Thus, $\lambda_2 = [h(\alpha,\beta)]_p^p$ for some $(\alpha,\beta) \in S^1$.
		Applying \cite[Proposition~2.8]{cuesta}, we obtain the existence of $(\alpha_0,\beta_0) \in S^1$ such that $h(\alpha_0,\beta_0)$ is a 
		second eigenfunction, and hence so is $\alpha_0 v^+ + \beta_0 v^-$.
		Since any second eigenfunction is sign-changing (see \cite[Theorem~2.8~(iii)]{BrPar}), 
		we have $\alpha_0 \beta_0 >0$.		
		
		It remains to show that $\alpha_0=\beta_0$.
		Since $\lambda_2 = [h(\alpha_0,\beta_0)]_p^p$, we have equality in \eqref{eq:proof:main-ineq1} for $(\alpha,\beta)=(\alpha_0,\beta_0)$. 
		According to Lemma~\ref{lem:appendix:inequality}, this can happen if and only if $\alpha_0 = \beta_0$ or the set
		$$
		K = \{(x,y) \in \mathbb{R}^N \times \mathbb{R}^N:~ U(x,y) \cdot V(x,y) < 0\}
		$$
		has zero $2N$-measure. 
		Since $v^\pm \not\equiv 0$ in $\Omega$ by the assumption, there exist sets $K^\pm$ of positive $N$-measure such that $v^+ > 0$ in $K^+$ and $v^- < 0$ in $K^-$.
		Consequently, 
		$$
		U(x,y) \cdot V(x,y)
		=
		v^+(x) \cdot v^-(y) < 0
		\quad \text{for any}~ (x,y) \in K^+ \times K^-,
		$$ 
		and hence $K^+ \times K^- \subset K$, which yields $|K|_{2N} > 0$.
		Therefore, we must have $\alpha_0 = \beta_0$, that is, $v = v^+ + v^-$ is a second eigenfunction.
		As a consequence, a posteriori, equalities must hold in \eqref{eq:1}, cf.\ \eqref{eq:1x}.
	\end{proof}

	Proposition~\ref{prop:lambda2-characterization} implies the following result which will be convenient in applications.
	\begin{proposition}\label{prop:lambda2-main2}
		Let $u \in \Wo$ be a second eigenfunction.
		Assume that there exists a function $v \in \Wo$ such that $v^\pm \not\equiv 0$  in $\Omega$ and 
		\begin{equation}\label{eq:3x}
			\int_\Omega |v^\pm|^p \,dx
			\geqslant 			
			\int_\Omega |u^\pm|^p \,dx
			\quad \text{and} \quad 
			\langle D[v]_p^p, v^\pm \rangle
			\leqslant
			\langle D[u]_p^p, u^\pm \rangle.
		\end{equation}   
		Then $v$ is a second eigenfunction and equalities hold in \eqref{eq:3x}.
	\end{proposition}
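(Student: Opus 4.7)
The plan is to reduce Proposition~\ref{prop:lambda2-main2} to the already established Proposition~\ref{prop:lambda2-characterization} by showing that the hypotheses on $v$ force the inequalities \eqref{eq:1} to hold.

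First I would use the fact that $u$ is a second eigenfunction, so the two equalities in \eqref{eq:1x} are available:
\begin{equation*}
\lambda_2 \int_\Omega |u^\pm|^p \, dx = \frac{1}{p}\langle D[u]_p^p, u^\pm \rangle.
\end{equation*}
Then I would chain the assumed inequalities \eqref{eq:3x} together with these equalities: from $\langle D[v]_p^p, v^+ \rangle \leqslant \langle D[u]_p^p, u^+ \rangle$ and $\int_\Omega |v^+|^p \,dx \geqslant \int_\Omega |u^+|^p \,dx$ we get
\begin{equation*}
\frac{1}{p}\langle D[v]_p^p, v^+ \rangle
\leqslant \frac{1}{p}\langle D[u]_p^p, u^+ \rangle
= \lambda_2 \int_\Omega |u^+|^p \, dx
\leqslant \lambda_2 \int_\Omega |v^+|^p \, dx,
\end{equation*}
and analogously for $v^-$. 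Thus $v$ satisfies both inequalities in \eqref{eq:1} and, since $v^\pm \not\equiv 0$ in $\Omega$ by hypothesis, Proposition~\ref{prop:lambda2-characterization} applies. This yields that $v$ is a second eigenfunction and that both inequalities in \eqref{eq:1} are in fact equalities.

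Finally, to upgrade the inequalities in \eqref{eq:3x} to equalities, I would insert the newly obtained equalities (the analogue of \eqref{eq:1x} for $v$) into the chain above: all terms in the displayed chain of inequalities must coincide. Reading the chain from end to end forces $\int_\Omega |v^+|^p \,dx = \int_\Omega |u^+|^p \,dx$ and, therefore, $\langle D[v]_p^p, v^+ \rangle = \langle D[u]_p^p, u^+ \rangle$, with the same argument producing the corresponding equalities for the negative parts.

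There is no real obstacle here: the entire content is a direct chaining of the hypotheses with \eqref{eq:1x} followed by an appeal to Proposition~\ref{prop:lambda2-characterization}. The only point worth checking carefully is that the sign-changing hypothesis $v^\pm \not\equiv 0$ is preserved and is needed precisely to invoke Proposition~\ref{prop:lambda2-characterization}, which is already granted by assumption.
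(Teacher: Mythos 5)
Your proposal is correct and matches the paper's proof essentially verbatim: chain the hypotheses \eqref{eq:3x} with the identities \eqref{eq:1x} to obtain \eqref{eq:1}, invoke Proposition~\ref{prop:lambda2-characterization}, and then read the resulting chain of inequalities as a closed loop to deduce the equalities in \eqref{eq:3x}. The only difference is that you spell out the last "closing the loop" step explicitly, whereas the paper compresses it into "the conclusion follows."
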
    
	\begin{proof}
		Since any second eigenfunction $u$ satisfies \eqref{eq:1x}, the assumptions \eqref{eq:3x} give
		$$
		\lambda_2 \int_\Omega |v^\pm|^p \,dx
		\geqslant
		\lambda_2 \int_\Omega |u^\pm|^p \,dx
		=
		\frac{1}{p}\langle D[u]_p^p, u^\pm \rangle
		\geqslant 
		\frac{1}{p}\langle D[v]_p^p, v^\pm \rangle.
		$$
		That is, $v$ satisfies the assumptions of Proposition~\ref{prop:lambda2-characterization}, and the conclusion follows.
	\end{proof}

	Another corollary of Proposition~\ref{prop:lambda2-characterization} is the following characterization of $\lambda_2$, cf.\ \cite[Remark~2.2]{BBDG} for the linear case $p=2$. We also refer to \cite{BrPar,Servadei2} for other characterizations of $\lambda_2$. 
	\begin{lemma}
		Let
		\begin{equation}\label{eq:lambda2-char}
			\mu_2 
			=
			\inf 
			\left\{
			\max
			\left\{
			\frac{\frac{1}{p}\langle D[v]_p^p, v^+ \rangle}{\int_\Omega |v^+|^p \, dx},
			\frac{\frac{1}{p}\langle D[v]_p^p, v^- \rangle}{\int_\Omega |v^-|^p \, dx}
			\right\}:~
			v \in \Wo, ~ v^\pm \not\equiv 0 \text{ in } \Omega
			\right\}.
		\end{equation}
		Then $\lambda_2 = \mu_2$ and any minimizer of $\mu_2$ is a second eigenfunction.
	\end{lemma}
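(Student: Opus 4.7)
The plan is to prove the two inequalities $\mu_2 \leq \lambda_2$ and $\mu_2 \geq \lambda_2$ separately, and then deduce the characterization of minimizers. All three assertions will follow as essentially direct consequences of Proposition~\ref{prop:lambda2-characterization} combined with the identities~\eqref{eq:1x}.

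For the upper bound $\mu_2 \leq \lambda_2$, I would simply test~\eqref{eq:lambda2-char} against any second eigenfunction $u \in \Wo$. Since $u$ changes sign in $\Omega$, we have $u^\pm \not\equiv 0$, so $u$ is admissible in the infimum. By~\eqref{eq:1x}, both of the Rayleigh-type quotients appearing inside the maximum evaluate to exactly $\lambda_2$, so the value of the functional on $u$ is $\lambda_2$. This gives $\mu_2 \leq \lambda_2$.

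For the reverse inequality $\mu_2 \geq \lambda_2$, let $v \in \Wo$ be any admissible competitor with $v^\pm \not\equiv 0$, and denote by $M(v)$ the value of the maximum in~\eqref{eq:lambda2-char}. I would argue by contradiction: if $M(v) < \lambda_2$, then both quotients are strictly less than $\lambda_2$, which upon clearing the (positive) denominators yields exactly the inequalities~\eqref{eq:1} with strict sign. Proposition~\ref{prop:lambda2-characterization} then forces these inequalities to hold as equalities, contradicting strictness. Hence $M(v) \geq \lambda_2$ for every admissible $v$, and passing to the infimum gives $\mu_2 \geq \lambda_2$.

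Finally, for the characterization of minimizers: if $v$ realizes $\mu_2 = \lambda_2$ in~\eqref{eq:lambda2-char}, then both Rayleigh-type quotients are $\leq \lambda_2$, which is precisely the hypothesis~\eqref{eq:1} of Proposition~\ref{prop:lambda2-characterization}; that proposition then yields that $v$ is a second eigenfunction. I do not anticipate a genuinely hard step here, since Proposition~\ref{prop:lambda2-characterization} already absorbs the analytic work; the only point requiring a bit of care is the strict-versus-nonstrict dichotomy in the lower-bound argument, and it is handled cleanly by the ``equality is forced'' conclusion of that proposition.
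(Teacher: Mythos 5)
Your proposal is correct and follows essentially the same route as the paper: both directions and the minimizer characterization are reduced to Proposition~\ref{prop:lambda2-characterization} together with the identities~\eqref{eq:1x}, with the contradiction argument for $\mu_2 \geqslant \lambda_2$ hinging on the ``equality is forced'' conclusion of that proposition. The only cosmetic difference is that you phrase the lower bound as $M(v)\geqslant\lambda_2$ for every admissible $v$ before taking the infimum, whereas the paper directly assumes $\mu_2<\lambda_2$ and extracts a near-infimal competitor; these are equivalent.
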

	\begin{proof}
		Since any second eigenfunction $u$ satisfies \eqref{eq:1x}, we get $\mu_2 \leqslant \lambda_2$.			
		Suppose now, by contradiction, that $\mu_2 < \lambda_2$. 
		That is, there exists $v \in \Wo$ such that $v^\pm \not\equiv 0$ in $\Omega$ and
		\begin{equation}\label{eq:proof2}
			\mu_2 
			\leqslant
			\max
			\left\{
			\frac{\frac{1}{p}\langle D[v]_p^p, v^+ \rangle}{\int_\Omega |v^+|^p \, dx},
			\frac{\frac{1}{p}\langle D[v]_p^p, v^- \rangle}{\int_\Omega |v^-|^p \, dx}
			\right\} 
			< 
			\lambda_2.
		\end{equation}
		The second inequality in \eqref{eq:proof2} implies
		\begin{equation}\label{eq:proof3}
			\lambda_2 \int_\Omega |v^+|^p \, dx 
			\geqslant 
			\frac{1}{p}\langle D[v]_p^p, v^+ \rangle
			\quad \text{and}\quad 
			\lambda_2 \int_\Omega |v^-|^p \, dx 
			\geqslant 
			\frac{1}{p}\langle D[v]_p^p, v^- \rangle,
		\end{equation}
		at least one inequality being strict.
		However, this contradicts Proposition~\ref{prop:lambda2-characterization}.
		That is, we have $\mu_2 = \lambda_2$.
		In a similar way, any minimizer $v$ of $\mu_2$ satisfies the inequalities \eqref{eq:proof3}, and hence Proposition~\ref{prop:lambda2-characterization} shows that $v$ is a second eigenfunction.
	\end{proof}

	\subsection{LENS}
	In this section, we provide a result on the characterization of LENS, which has the same nature as Proposition~\ref{prop:lambda2-main2}.
	Consider the Nehari manifold associated with the problem \eqref{eq:D},
	\begin{equation*}
		\mathcal{N}
		=
		\{u \in \Wo \setminus \{0\}:~ \langle DE(u),u\rangle = 0\},
	\end{equation*}
	and the following subset of $\mathcal{N}$ (a nodal Nehari set) which contains all nodal solutions of \eqref{eq:D}:
	\begin{equation*}
		\mathcal{M}
		=
		\{u \in \Wo:~ u^\pm \not\equiv 0 ~\text{in}~ \Omega,~ \langle DE(u),u^+\rangle = \langle DE(u),u^-\rangle = 0\},
	\end{equation*} 
	cf.\ \eqref{eq:DEu}.
	It is known that, under the assumption \ref{Fs}, any minimizer of the problem
	\begin{equation}\label{eq:m}
		m = \inf\{E(u):~ u \in \mathcal{M}\}
	\end{equation}
	is a LENS, see \cite[Lemma~4.7]{CNW} and also comments and references provided in Section~\ref{sec:intro}.

	\begin{proposition}\label{prop:altern-char-lens}
		Let $u$ be a LENS.
		Assume that there exists a function $v \in \Wo$ such that $v^\pm \not\equiv 0$  in $\Omega$ and 
		\begin{equation}\label{eq:2}
			\int_\Omega F(v) \,dx
			\geqslant
			\int_\Omega F(u) \,dx,
			\quad
			\int_\Omega f(v^\pm)v^\pm \, dx
			=
			\int_\Omega f(u^\pm)u^\pm \, dx,
			\quad 
			\langle D[v]_p^p, v^\pm \rangle
			\leqslant
			\langle D[u]_p^p, u^\pm \rangle.
		\end{equation}		  
		Then $v$ is a LENS and equalities hold in \eqref{eq:2}.
	\end{proposition}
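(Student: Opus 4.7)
The plan is to first derive from the hypotheses two preliminary facts---the energy bound $E(v) \leqslant m := E(u)$ and the one-sided Nehari inequalities $\langle DE(v), v^\pm\rangle \leqslant 0$---then use the nodal Nehari projection from \cite{CNW} to embed $v$ into $\mathcal{M}$, and finally force $v$ itself to lie in $\mathcal{M}$ by a two-step monotonicity argument. The main obstacle is the \emph{scaling step}, which shows that the Nehari multipliers of $v$ are at most $1$.

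For the preliminary facts, the Euler identity $[v]_p^p = \tfrac{1}{p}(\langle D[v]_p^p, v^+\rangle + \langle D[v]_p^p, v^-\rangle)$ combined with the hypothesis on $\langle D[v]_p^p, v^\pm\rangle$ gives $[v]_p^p \leqslant [u]_p^p$, and together with $\int_\Omega F(v)\,dx \geqslant \int_\Omega F(u)\,dx$ yields $E(v) \leqslant E(u) = m$. Since $u \in \mathcal{M}$ implies $\int_\Omega f(u^\pm)u^\pm\,dx = \tfrac{1}{p}\langle D[u]_p^p, u^\pm\rangle$, the equality of $f$-integrals in \eqref{eq:2} yields $\langle DE(v), v^\pm\rangle \leqslant 0$. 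Since $v^\pm \not\equiv 0$, the nodal Nehari machinery from \cite[Section~4]{CNW} supplies unique $t^\pm > 0$ with $w := t^+ v^+ + t^- v^- \in \mathcal{M}$, and $(t^+, t^-)$ is the unique strict maximum of $(s, r) \mapsto E(sv^+ + rv^-)$; hence $E(w) \geqslant m$.

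The main step is to show $t^\pm \leqslant 1$. Writing $U(x,y) := v^+(x) - v^+(y)$ and $V(x,y) := v^-(y) - v^-(x)$, the pointwise identity $UV \leqslant 0$ (as in the proof of Proposition~\ref{prop:polarization}) yields the clean formula
\begin{equation*}
	\tilde{\Psi}^\pm(s, r) := \tfrac{1}{p}\langle D[sv^+ + rv^-]_p^p, v^\pm\rangle = \int_{\mathbb{R}^N}\!\int_{\mathbb{R}^N} \frac{(s|U| + r|V|)^{p-1}|W^\pm|}{|x-y|^{N+ps}}\,dxdy
\end{equation*}
with $|W^+| := |U|$, $|W^-| := |V|$. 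In particular $\tilde{\Psi}^+(1, \cdot)$ is strictly increasing, and by $(p-1)$-homogeneity the map $\alpha \mapsto \tilde{\Psi}^-(1, \alpha)/\alpha^{p-1}$ is strictly decreasing. By \ref{Fs}\ref{Fs-b}, the scalar maps $\phi(t) := \int_\Omega \frac{f(tv^+)}{|tv^+|^{p-2}(tv^+)}|v^+|^p\,dx$ and $\psi(t)$ (analogously defined for $v^-$) are strictly increasing in $t > 0$, with $\phi(1) = \int_\Omega f(v^+)v^+\,dx$ and $\psi(1) = \int_\Omega f(v^-)v^-\,dx$. Writing the two Nehari conditions for $w$ in the reduced variable $\alpha := t^-/t^+$, they read $\tilde{\Psi}^+(1, \alpha) = \phi(t^+)$ and $\tilde{\Psi}^-(1, \alpha)/\alpha^{p-1} = \psi(t^-)$. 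The preliminary inequalities say $\tilde{\Psi}^+(1, 1) \leqslant \phi(1)$ and $\tilde{\Psi}^-(1, 1) \leqslant \psi(1)$; if $t^+ > 1$, the first equation forces $\alpha > 1$, and then $\tilde{\Psi}^-(1, \alpha)/\alpha^{p-1} < \tilde{\Psi}^-(1,1) \leqslant \psi(1) < \psi(t^-)$, contradicting the second equation. The symmetric argument excludes $t^- > 1$.

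To close, note that on $\mathcal{M}$ one has the identity $E(\cdot) = \int_\Omega G(\cdot)\,dx$ with $G(z) := \tfrac{1}{p}f(z)z - F(z)$; by \ref{Fs}\ref{Fs-b}, $G$ is strictly increasing in $|z|$, hence $K_\pm(t) := \int_\Omega G(tv^\pm)\,dx$ is strictly increasing. Using $t^\pm \leqslant 1$,
\begin{equation*}
	E(w) = K_+(t^+) + K_-(t^-) \leqslant K_+(1) + K_-(1) = \int_\Omega G(v)\,dx = \tfrac{1}{p}\int_\Omega f(u)u\,dx - \int_\Omega F(v)\,dx \leqslant \int_\Omega G(u)\,dx = m,
\end{equation*}
where the hypotheses $\int f(v^\pm)v^\pm = \int f(u^\pm)u^\pm$ and $\int F(v) \geqslant \int F(u)$ enter in the middle. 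Together with $E(w) \geqslant m$, every inequality is an equality, and strict monotonicity of $K_\pm$ forces $t^+ = t^- = 1$. Hence $v = w \in \mathcal{M}$ and $v$ is a LENS; retracing the chain then yields the three equalities in \eqref{eq:2}.
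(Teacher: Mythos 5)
Your proof is correct and follows the same overall skeleton as the paper's: project $v$ onto the nodal Nehari set via the unique multipliers $t^\pm$ from \cite[Lemma~4.4]{CNW}, show $t^\pm \leqslant 1$, and close by passing through the identity $E = \frac{1}{p}\int_\Omega G(\cdot)\,dx$ on $\mathcal{M}$ with $G(z)=f(z)z - pF(z)$ (you use the normalization $\frac{1}{p}G$, which is immaterial). The one genuine variation is in how you establish $t^\pm \leqslant 1$. The paper fixes, without loss of generality, $t_- \leqslant t_+$, factors out the $(p-1)$-homogeneity to reduce to the ratio $s = t_-/t_+ \in (0,1]$, and then invokes a scalar inequality (Lemma~\ref{lem:appendix:inequality2}) to compare $\langle D[v^+ + sv^-]_p^p, v^+\rangle$ with $\langle D[v]_p^p, v^+\rangle$. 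You instead write out the explicit formula $\frac{1}{p}\langle D[sv^+ + rv^-]_p^p, v^\pm\rangle = \iint (s|U|+r|V|)^{p-1}|W^\pm|\,\frac{dxdy}{|x-y|^{N+ps}}$ (valid since $UV\leqslant 0$), read off the strict monotonicity of $\tilde\Psi^+(1,\cdot)$ and of $\alpha\mapsto\tilde\Psi^-(1,\alpha)/\alpha^{p-1}$, and run a short contradiction in the reduced variable $\alpha=t^-/t^+$ that avoids the WLOG step. Both arguments rest on the same underlying pointwise identity $|sU-rV|^{p-2}(sU-rV)U = (s|U|+r|V|)^{p-1}|U|$; your version is a bit more symmetric and transparent, while the paper's isolates the needed monotonicity in a stand-alone lemma. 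Two small remarks: the preliminary fact $E(v)\leqslant m$ that you derive at the outset is never actually used in the argument, and the strict monotonicity of $t\mapsto\int_\Omega G(tv^\pm)\,dx$ that you need at the very end (to force $t^\pm=1$ from the chain of equalities) deserves a word — it follows from $G'(z)=\frac{1}{p}z^p\,\frac{d}{dz}\bigl(f(z)/z^{p-1}\bigr)$ together with the strictness in \ref{Fs}\ref{Fs-b} and $v^\pm\not\equiv 0$.
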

	\begin{proof}
		Some parts of our arguments are reminiscent of those from the proof of \cite[Lemma~4.5]{CNW}, where the authors obtain the attainability of $m$ defined  in \eqref{eq:m}.
		We provide details since our statement is different and we employ other notation. 			
		By \cite[Lemma~4.4]{CNW}, there exist unique positive numbers $t_+, t_-$ such that $t_+ v^+ + t_- v^- \in \mathcal{M}$, which reads as
		\begin{align}
			\label{eq:lens:1}
			\frac{1}{p} \,\langle D[t_+ v^+ + t_- v^-]_p^p, t_+ v^+ \rangle 
			&=
			\int_\Omega f(t_+ v^+ + t_- v^-) t_+ v^+ \, dx
			\equiv
			\int_\Omega f(t_+ v^+) t_+ v^+ \, dx,\\
			\label{eq:lens:2}
			\frac{1}{p} \,\langle D[t_+ v^+ + t_- v^-]_p^p, t_- v^- \rangle 
			&=
			\int_\Omega f(t_+ v^+ + t_- v^-) t_- v^- \, dx
			\equiv
			\int_\Omega f(t_- v^-) t_- v^- \, dx.
		\end{align}
		Let us show that $t_\pm \in (0,1]$.
		Assume, without loss of generality, that $t_- \leqslant t_+$.
		In view of the homogeneity of the left-hand side of \eqref{eq:lens:1}, we rewrite it as 
		\begin{equation}
			\label{eq:prop:proof:lens:2}
			\frac{1}{p} \,\langle D\Big[v^+ + \frac{t_-}{t_+} v^-\Big]_p^p, v^+ \rangle 
			=
			\int_\Omega f(t_+ v^+) \, t_+^{1-p} v^+ \,dx.
		\end{equation}
		Denoting, as in \eqref{eq:lambda2-proof-char0}, 
		$$
		U(x,y) = v^+(x) - v^+(y)
		\quad \text{and} \quad
		V(x,y) = -(v^-(x) - v^-(y)),
		$$
		and observing, similarly to \eqref{eq:lambda2-proof-char1}, that 
		$$
		(v^+(x) + sv^-(x)) - (v^+(y) + sv^-(y))
		=
		U(x,y) - sV(x,y)
		\quad \text{for any}~ s \in \mathbb{R},
		$$
		and $U V \leqslant 0$ a.e.\ in $\mathbb{R}^N \times \mathbb{R}^N$, 
		we apply Lemma~\ref{lem:appendix:inequality2} with $s=t_-/t_+ \in (0,1]$ and get
		\begin{equation}\label{eq:prop:proof:lens:x1}
			\langle D\Big[v^+ + \frac{t_-}{t_+} v^-\Big]_p^p, v^+ \rangle 
			\leqslant 
			\langle D[v^+ + v^-]_p^p, v^+ \rangle 
			\equiv 
			\langle D[v]_p^p, v^+ \rangle. 
		\end{equation}
		On the other hand, since $u$ is a solution of \eqref{eq:D}, we use the second and third assumptions from \eqref{eq:2} to obtain
		\begin{equation}\label{eq:prop:proof:lens:3}
			\frac{1}{p} \,\langle D[v]_p^p, v^+ \rangle  
			\leqslant
			\frac{1}{p} \,\langle D[u]_p^p, u^+ \rangle  
			=
			\int_\Omega f(u) u^+ \, dx
			\equiv
			\int_\Omega f(u^+) u^+ \, dx
			=
			\int_\Omega f(v^+) v^+ \, dx. 
		\end{equation}
		Combining \eqref{eq:prop:proof:lens:2}, \eqref{eq:prop:proof:lens:x1}, and \eqref{eq:prop:proof:lens:3}, we derive
		$$
		\int_\Omega 
		\left(
		\frac{f(v^+)}{(v^+)^{p-1}} 
		-
		\frac{f(t_+ v^+)}{(t_+ v^+)^{p-1}} 
		\right)
		(v^+)^p \,dx
		\geqslant 0.
		$$
		Since $z \mapsto f(z)/z^{p-1}$ is increasing in $(0,+\infty)$ by the assumptions \ref{Fs}~\ref{Fs-b}, we conclude that $t_+ \leqslant 1$, and hence $t_- \leqslant t_+ \leqslant 1$.
		
		Consider now a function $G$ defined as $G(z) = f(z)z - pF(z)$ and note that $G(0)=0$.
		Since $t_+ v^+ + t_- v^- \in \mathcal{M}$, we have
		\begin{align}
			m
			&\leqslant 
			E(t_+ v^+ + t_- v^-)
			\\
			&=
			E(t_+ v^+ + t_- v^-)
			-
			\frac{1}{p}\langle 
			DE(t_+ v^+ + t_- v^-), 
			t_+ v^+ + t_- v^-
			\rangle
			\\
			&=
			\label{eq:prop:lens:1}
			\frac{1}{p}
			\int_\Omega G(t_+ v^+ + t_- v^-) \,dx
			=
			\frac{1}{p}
			\int_\Omega G(t_+ v^+)\,dx
			+
			\frac{1}{p}
			\int_\Omega G(t_- v^-)\,dx.
		\end{align}
		The assumptions \ref{Fs}~\ref{Fs-b} imply that $G$ is 
		decreasing in $(-\infty,0)$, 
		increasing in $(0,+\infty)$, and nonnegative in $\mathbb{R}$.
		Therefore, since $t_\pm \in (0,1]$, we get
		\begin{equation}\label{eq:prop:lens:2}
			m
			\leqslant
			\frac{1}{p}
			\int_\Omega G(t_+ v^+)\,dx
			+
			\frac{1}{p}
			\int_\Omega G(t_- v^-)\,dx
			\leqslant  
			\frac{1}{p}
			\int_\Omega G(v^+)\,dx
			+
			\frac{1}{p}
			\int_\Omega G(v^-)\,dx
			=
			\frac{1}{p}
			\int_\Omega G(v)\,dx.
		\end{equation}
		In view of the first and second assumptions from \eqref{eq:2}, 
		we obtain
		\begin{equation}\label{eq:prop:lens:3}
			m
			\leqslant  
			\frac{1}{p}
			\int_\Omega G(v)\,dx
			\leqslant
			\frac{1}{p}
			\int_\Omega G(u)\,dx  
			=
			E(u) - \frac{1}{p} \langle DE(u),u \rangle
			=
			E(u)  
			= 
			m.
		\end{equation}
		That is, equalities hold in \eqref{eq:prop:lens:1}, \eqref{eq:prop:lens:2}, \eqref{eq:prop:lens:3}, which yields
		$t_\pm = 1$ and $v \in \mathcal{M}$ is a minimizer of $E$ over $\mathcal{M}$.
		Moreover, equalities take place in \eqref{eq:2}.
		Consequently, by \cite[Lemma~4.7]{CNW}, $v$ is a least energy nodal solution of \eqref{eq:D}.
	\end{proof}

	\begin{remark}
		Let us note that, in general, the equalities $\int_\Omega f(v^\pm)v^\pm \, dx = \int_\Omega f(u^\pm)u^\pm \, dx$ in \eqref{eq:2} do not imply that $\int_\Omega F(v) \,dx = \int_\Omega F(u) \,dx$, and the reverse implication cannot be guaranteed either.
		This can be seen by considering the model case $f(z) = |z|^{\alpha-2}z + |z|^{\beta-2}z$ for $p<\alpha<\beta<p_s^*$ and with sign-changing functions $u,v \in \Wo$ satisfying, e.g.,
		\begin{align*}
			\int_\Omega |v^\pm|^\alpha \,dx = 1,
			\quad 
			\int_\Omega |v^\pm|^\beta \,dx = 2,
			\quad
			\int_\Omega |u^\pm|^\alpha \,dx = 2,
			\quad
			\int_\Omega |u^\pm|^\beta \,dx = 1.
		\end{align*}
		Consequently, in general, the first two assumptions in \eqref{eq:2} are independent from each other.  
	\end{remark}

	\begin{remark}\label{rem:Fs}
		The proof of Proposition~\ref{prop:altern-char-lens} relies on the results from \cite[Section~4]{CNW}.
		If these results are valid under weaker (or just different) assumptions on $f$ than \ref{Fs} (see, e.g., the assumptions in \cite{GTZ,GYZ,luo,tengwangwang} for the case $p=2$ and \cite{WZ} for the case $p>1$), then so does Proposition~\ref{prop:altern-char-lens}, and hence \ref{Fs} can be changed accordingly.
	\end{remark}

	\section{Proof of Theorem~\ref{thm:payne}}\label{section:proof}
	
	Let $u \in \Wo$ be either a second eigenfunction or LENS of \eqref{eq:D}. 
	Suppose, contrary to the statement of Theorem~\ref{thm:payne}, that $u$ does not change sign in a neighborhood of $\partial \Omega$.
	Without loss of generality,  
	let $\mathrm{supp}\, u^- \subset \Omega$, so that $u \geqslant 0$ in this neighborhood. 
	
	Since $\Omega$ is Steiner symmetric with respect to the hyperplane $H_0$, we have $P_0 \Omega = \Omega$ and $\widetilde{P}_0 \Omega = \Omega$, see, e.g., \cite[Lemma~2.2]{BK1}.
	Therefore, Corollary~\ref{cor:Pa-Sobolev} gives 
	$P_0 u \in \Wo$. 
	Combining the inequalities from Proposition~\ref{prop:polarization} and equalities \eqref{eq:weak-pol} with either Proposition~\ref{prop:lambda2-main2} (when $u$ is a second eigenfunction) or Proposition~\ref{prop:altern-char-lens} (when $u$ is a LENS), we deduce that $P_0 u$ is also either a second eigenfunction or LENS. 
	In particular, equalities hold in \eqref{eq:improvement}, \eqref{eq:improvement2}, which implies that either $P_0 u(x) = u(x)$ for all $x \in \mathbb{R}^N$ or $P_0 u(x) = u(\sigma_0(x))$ for all $x \in \mathbb{R}^N$, see Remark~\ref{rem:equality}.
	Assume, without loss of generality, that $P_0 u = u$ in $\mathbb{R}^N$.	
	In particular, this yields 
	\begin{equation}
		\label{eq:uu1}
		u(x) \leqslant u(\sigma_0(x))
		\quad \text{for any}~ 
		x \in \Sigma_0^+.
	\end{equation}
	
	Let us now define 
	\begin{equation}\label{eq:d1}
		d_1 
		= 
		\sup\{ 
		t \geqslant 0:~ \mathrm{supp}\, u^- + t e_1 \subset \Omega
		\}.
	\end{equation}
	Our assumption $\mathrm{supp}\, u^- \subset \Omega$ gives $d_1 > 0$. 
	We fix $a = d_1 / 2$ and consider the polarization $P_a u$. 
	We see that $\mathrm{supp}\,P_{a_n}u^- =\widetilde{P}_{a_n} (\mathrm{supp}\, u^-) \subset \Omega$ for any sequence $a_n \nearrow a$, and $a$ is the supremum among polarization parameters with this set inclusion property, see Figure~\ref{fig:proof}.
	Therefore, applying Lemma~\ref{lem:Pa-Sobolev2} to $-u^-$ (with $\widetilde{P}_a$), we get $P_a u^- \equiv - \widetilde{P}_a(-u^-) \in \Wo$.
	On the other hand, again by \cite[Lemma~2.2]{BK1}, we have $P_a \Omega =\Omega$, and hence Lemma~\ref{lem:Pa-Sobolev} applied to $u^+$ gives $P_a u^+ \in \Wo$.
	Thus, we conclude that $P_a u \in \Wo$ and $\mathrm{supp}\,P_au^-$ touches $\partial \Omega \cap \Sigma_a^+$. 
	
	\begin{figure}[!h]
		\begin{center}
			\includegraphics[width=0.8\linewidth]{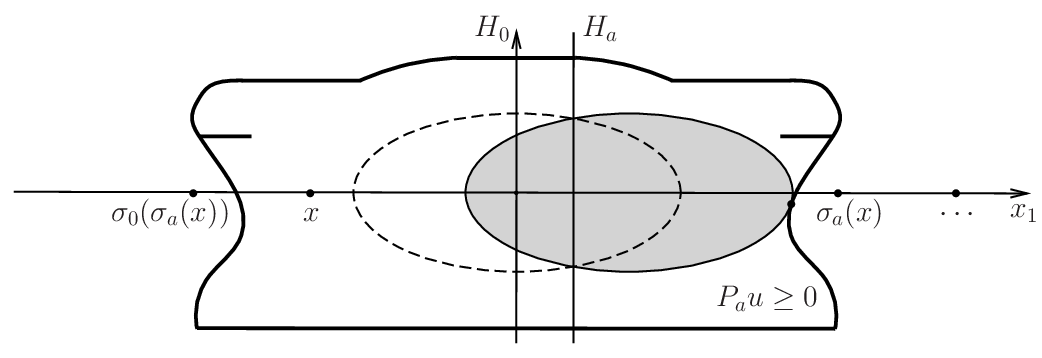}
		\end{center}
		\caption{The gray oval is $\mathrm{supp}\,P_au^-$, and the dashed oval is the boundary of $\mathrm{supp}\,u^-$.} 
		\label{fig:proof}
	\end{figure}
	
	As above, a combination of Proposition~\ref{prop:polarization}, equalities \eqref{eq:weak-pol}, and either Proposition~\ref{prop:lambda2-main2} or Proposition~\ref{prop:altern-char-lens} guarantees that $P_a u$ must be either a second eigenfunction or LENS, and equalities hold in \eqref{eq:improvement}, \eqref{eq:improvement2}.
	Since $u \geqslant 0$ in a neighborhood of $\partial\Omega$ but $\mathrm{supp}\,P_au^-$ touches $\partial \Omega \cap \Sigma_a^+$, we conclude that $P_a u(x) \neq u(x)$ for some $x \in \Sigma_a^+$.
	Therefore,  Proposition~\ref{prop:polarization} implies that $P_a u(x) = u(\sigma_a(x))$ \textit{for all} $x \in \mathbb{R}^N$, see Remark~\ref{rem:equality}.
	(This is the main place where the characterization of equality cases in Proposition~\ref{prop:polarization} is used.)
	In particular, this yields 
	\begin{equation}
		\label{eq:uu2}
		u(x) \leqslant u(\sigma_a(x))
		\quad \text{for any}~ 
		x \in \Sigma_a^-.
	\end{equation}
	
	Let us now obtain a contradiction from \eqref{eq:uu1} and \eqref{eq:uu2}. 
	Take any $x \in \Sigma_a^-$ such that $u(x) > 0$.
	Then \eqref{eq:uu2} gives $u(\sigma_a(x))>0$, where $\sigma_a(x) \in \Sigma_a^+$.
	We always have $\Sigma_a^+ \subset \Sigma_0^+$.
	Therefore, \eqref{eq:uu1} applied to $\sigma_a(x)$ gives $u(\sigma_0(\sigma_a(x)))>0$, where $\sigma_0(\sigma_a(x)) \in \Sigma_0^-$.
	We always have $\Sigma_0^- \subset \Sigma_a^-$.
	Hence, we again apply \eqref{eq:uu2}, etc.
	The consecutive application of \eqref{eq:uu1} and \eqref{eq:uu2} leads to the infinite chain of inequalities 
	\begin{equation}\label{eq:prof:teorx1}
		0 < u(x) \leqslant u(\sigma_a(x)) \leqslant u(\sigma_0(\sigma_a(x)))
		\leqslant \ldots \leqslant 
		u(\sigma_a(\sigma_0(\dots \sigma_a(\sigma_0(x)))))
		\leqslant \ldots
	\end{equation}
	In particular, recalling that $u \in \Wo$, we see that $\sigma_a(\sigma_0(\dots \sigma_a(\sigma_0(x)))) \in \Omega$ for any number of iterations.
	However, it is not hard to observe that the point
	$\sigma_a(\sigma_0(\dots \sigma_a(\sigma_0(x))))$ 
	moves to infinity along the $x_1$-axis as the number of iterations grows.
	Indeed, the first coordinates of these points satisfy the following relations (see Figure~\ref{fig:proof}):
	\begin{align*}
		(\sigma_a(x))_1 
		&= 
		2a - x_1,\\
		(\sigma_0(\sigma_a(x)))_1 
		&= 
		- (\sigma_a(x))_1
		=
		x_1 - 2a,\\
		(\sigma_a(\sigma_0(\sigma_a(x)))_1
		&=
		2a - (\sigma_0(\sigma_a(x)))_1 
		=
		4a - x_1,\\
		&\cdots
	\end{align*}
	Since $\Omega$ is bounded and $u = 0$ in $\mathbb{R}^N \setminus \Omega$, we get a contradiction.
	
	Notice that the initial choice $x \in \Sigma_0^-$ for the assumption $u(x)>0$ is not restrictive.
	Indeed, if $x \in \Sigma_0^+$ is such that $u(x)>0$, then \eqref{eq:uu1} gives $u(\sigma_0(x)) > 0$ and $\sigma_0(x) \in  \Sigma_0^-$ and we can redenote $\sigma_0(x)$ by $x$, while if $x \in H_0$ is such that $u(x)>0$, then we can shift $x$ it to the left due to the continuity of $u$.
	This finishes the proof.
	\qed

	\bigskip
	\begin{remark}
		The polarization arguments in the proof of Theorem~\ref{thm:payne} do not involve any Hopf's type information about $u$ and require no regularity of $\partial \Omega$ (unlike the proof of \cite[Theorem~1.2]{BK1} about the local nonlinear case), and they do not use a careful analysis of the structure of $P_a u$ (unlike the proof of \cite[Theorem~1.1]{BBDG} about the nonlocal linear case in the ball). 
		The additional constructions from \cite{BBDG,BK1} are ``substituted'' by the characterization of equality cases in Proposition~\ref{prop:polarization}. 
		
		However, it is hard to adapt a similar idea to the local case (e.g., with the aim of weakening regularity assumptions on $\partial \Omega$ imposed in \cite[Theorem~1.2]{BK1}), since 
		local counterparts of the inequalities  \eqref{eq:improvement} and \eqref{eq:improvement2} from Proposition~\ref{prop:polarization} are \textit{always equalities}, see \cite[Lemma~2.3]{BartschWethWillem}. 
		In particular, in the local case, we cannot guarantee that $P_a u(x) = u(\sigma_a(x))$ for all $x \in \mathbb{R}^N$.
	\end{remark}
	
	\begin{remark}
		In the proof of Theorem~\ref{thm:payne}, the boundedness of $\Omega$ can be substituted by the boundedness of $N$-measure of $\Omega$, by noting that the process of consecutive reflections with respect to $H_0$ and $H_a$ (see \eqref{eq:uu1} and \eqref{eq:uu2}) ``pushes'' any set to infinity along the $x_1$-axis.
	\end{remark}
	
	\begin{remark}
		Theorem~\ref{thm:payne} and all the results of Sections~\ref{section:aux}, \ref{section:aux2} remain valid if we substitute the space $\Wo$ by
		\begin{equation}\label{eq:Xsp}
			X_0^{s,p}(\Omega) 
			=
			\{
			u \in \W:~ u=0 ~\text{a.e. in}~ \mathbb{R}^N \setminus \Omega
			\},
		\end{equation}
		provided $\Omega$ supports the compactness of the embedding $X_0^{s,p}(\Omega) \hookrightarrow L^p(\Omega)$. 
		(Lemmas~\ref{lem:Pa-Sobolev} and~\ref{lem:Pa-Sobolev2} follow directly from the definition of $X_0^{s,p}(\Omega)$.)
		It is not hard to see that $\Wo \subset X_0^{s,p}(\Omega)$.
		Moreover, equality holds if $\partial \Omega$ is sufficiently regular, see, e.g., \cite{FSV}.
		But, in general, the space $X_0^{s,p}(\Omega)$ is rougher than $\Wo$ since it is not sensitive to perturbations of $\Omega$ by sets of zero $N$-measure (e.g., ``cuts'' in $\Omega$ are invisible for $X_0^{s,p}(\Omega)$). 
	\end{remark}

	\begin{remark}
		The proof of Theorem~\ref{thm:payne} justifies a stronger assertion than Theorem~\ref{thm:payne}. 
		Assume, for simplicity, that $\Omega$ is a bounded open set with continuous boundary in the sense of \cite[Definition~4]{FSV}.
		Let us decompose $\partial \Omega$ in three parts - the left ``lid'' $L$, right ``lid'' $R$, and cylindrical part $C$ parallel to the $x_1$-axis, as follows.
		Let us take any open segment $l \subset \Omega$ parallel to the $x_1$-axis, symmetric with respect to $H_0$, and such that end-points of $l$ lie on $\partial \Omega$. 
		The sets $L$ and $R$ are the unions of left and right end-points of such segments, respectively, and $C = \partial \Omega \setminus (L \cup R)$, cf.\ Figure~\ref{fig:improv}.
		Let $u \in \Wo$ be a second eigenfunction or LENS of \eqref{eq:D}.
		Then $u$ necessarily satisfies at least one of the following two properties:
		\begin{enumerate}
			\item[1)] $\mathrm{supp}\,u^+ \cap \overline{L} 
			\neq \emptyset~$
			and 
			$~\mathrm{supp}\,u^- \cap \overline{R}
			\neq \emptyset$,
			\item[2)] $\mathrm{supp}\,u^+ \cap \overline{R}
			\neq \emptyset~$ 
			and 
			$~\mathrm{supp}\,u^- \cap \overline{L}
			\neq \emptyset$.
		\end{enumerate}
		To establish this assertion, 
		the proof of Theorem~\ref{thm:payne} is repeated almost verbatim. 
		Notice that, under the current assumptions on $\Omega$, the result of Lemma~\ref{lem:Pa-Sobolev2} remains valid if we allow $P_{a_n}(\mathrm{supp}\,u^+) \subset \overline{\Omega}$, as it follows from the equality $\Wo = X_0^{s,p}(\Omega)$, see \cite[Theorem~6]{FSV}.
		We omit further details.
		
		In Figure~\ref{fig:improv1x} we depict a hypothetical behavior of $u$ which is ruled out by this assertion and not by Theorem~\ref{thm:payne}. 
		
		\begin{figure}[!ht]
			\begin{center}
				\begin{subfigure}{0.49\textwidth}
					\includegraphics[width=\linewidth]{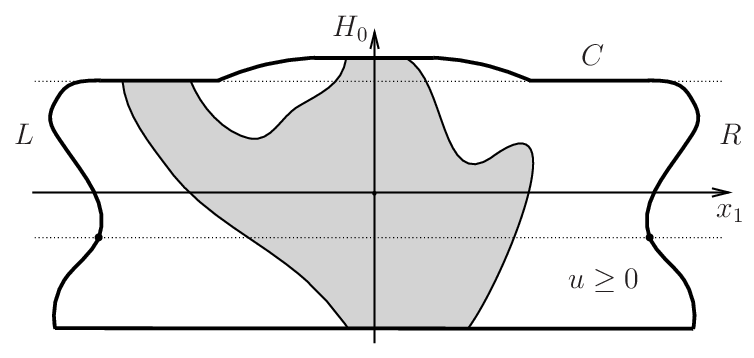}
					\caption{}
					\label{fig:improv1x}
				\end{subfigure}
				\hspace*{\fill}
				\begin{subfigure}{0.49\textwidth}
					\includegraphics[width=\linewidth]{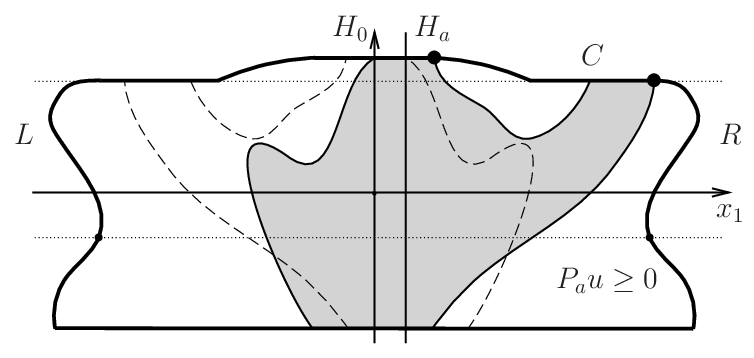}
					\caption{}
					\label{fig:improv2x}
				\end{subfigure}
			\end{center}
			\caption{(a): $u$ is positive in the white part and negative in the gray part (and hence $u=0$ on the boundary of the gray part), that is, $\mathrm{supp}\,u^- \cap (\overline{L} \cup \overline{R}) = \emptyset$. (b): polarization of $u$ with respect to $H_a$ for a maximal value of $a$, such that $\mathrm{supp}\,P_au^-$ touches $\overline{R}$ at two bold dots.} 
			\label{fig:improv}
		\end{figure}
	\end{remark}

	\appendix
	\section{Auxiliary results}\label{section:appendix}
	In this section, we collect a few technical results used in the proofs above.
	We start with a four-point inequality needed for Proposition~\ref{prop:polarization}.
	Let a function $J: \mathbb{R}^2 \mapsto \mathbb{R}$ be defined as in \eqref{eq:J}, i.e.,
	$$
	J(\alpha,\beta) = |\alpha - \beta|^{p-2}(\alpha - \beta)(\alpha^+ - \beta^+).
	$$
	Recall that $\alpha^+ = \max\{\alpha,0\}$.
	Also, we denote by $\theta: \mathbb{R} \mapsto \mathbb{R}$ the Heaviside function and we assume $\theta(0)=0$, for definiteness.
	Rewriting $J$ in terms of $\theta$, we have
	$$
	J(\alpha,\beta) = |\alpha - \beta|^{p-2}(\alpha - \beta)(\theta(\alpha)\alpha - \theta(\beta) \beta).
	$$
	\begin{lemma}\label{lem:four_point_lemma}
		Let $p>1$. 
		Assume that $a < A$ and $b < B$. 
		Then
		\begin{align}
			-(p-1) \, \max \{1,p-1\}
			&\int_a^A \int_b^B  
			|\alpha-\beta|^{p-2} \big(\theta (\alpha) + \theta(\beta)\big)
			\,d\beta d\alpha
			\\
			&\leqslant
			J(A,B) - J(a,B) - J(A,b) + J(a,b) 
			\\
			\label{eq:lem1}
			&\leqslant
			-(p-1) \, \min \{1,p-1\}
			\int_a^A \int_b^B  
			|\alpha-\beta|^{p-2} \big(\theta (\alpha) + \theta(\beta)\big)
			\,d\beta d\alpha.~~~~
		\end{align}
		In particular, 
		\begin{equation}\label{eq:J1}
			J(A,B) - J(a,B) - J(A,b) + J(a,b) \leqslant 0,
		\end{equation}
		and equality takes place in \eqref{eq:J1} if and only if $A \leqslant 0$ and $B \leqslant 0$.
	\end{lemma}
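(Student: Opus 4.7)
The plan is to reduce the four-point inequality to a pointwise bound on the mixed partial derivative $\partial_\alpha \partial_\beta J$ and then integrate. The starting identity is
\begin{equation*}
J(A,B) - J(a,B) - J(A,b) + J(a,b) = \int_a^A \int_b^B \partial_\alpha \partial_\beta J(\alpha,\beta)\, d\beta\, d\alpha,
\end{equation*}
which follows from iterated use of the fundamental theorem of calculus on slices: for $p>1$ the scalar map $t\mapsto |t|^{p-2}t$ is absolutely continuous with locally integrable derivative $(p-1)|t|^{p-2}$, so each slice $J(\cdot,\beta)$ and $\partial_\alpha J(\alpha,\cdot)$ is absolutely continuous on bounded intervals, while $|\alpha-\beta|^{p-2}$ belongs to $L^1_{\mathrm{loc}}(\mathbb{R}^2)$ exactly when $p>1$, legitimising Fubini. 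A direct calculation on the open, full-measure set $\{\alpha\neq\beta,\ \alpha\neq 0,\ \beta\neq 0\}$ gives
\begin{equation*}
\partial_\alpha \partial_\beta J(\alpha,\beta) = -(p-1)(p-2)|\alpha-\beta|^{p-4}(\alpha-\beta)(\alpha^+-\beta^+) - (p-1)|\alpha-\beta|^{p-2}(\theta(\alpha)+\theta(\beta)).
\end{equation*}

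Next I would introduce the abbreviations $\rho := (\alpha^+-\beta^+)/(\alpha-\beta) \in [0,1]$ and $\tau := \theta(\alpha)+\theta(\beta) \in \{0,1,2\}$, which compress the mixed partial to
\begin{equation*}
\partial_\alpha\partial_\beta J(\alpha,\beta) = -(p-1)\,|\alpha-\beta|^{p-2}\bigl[(p-2)\rho+\tau\bigr].
\end{equation*}
Since the prefactor $-(p-1)|\alpha-\beta|^{p-2}$ is negative, the pointwise two-sided bound required for \eqref{eq:lem1} becomes the elementary inequality
\begin{equation*}
\min\{1,p-1\}\,\tau \leqslant (p-2)\rho+\tau \leqslant \max\{1,p-1\}\,\tau,
\end{equation*}
which is verified by a short case analysis in the signs of $\alpha$ and $\beta$. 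When both are non-positive, $\rho=\tau=0$ and all three quantities vanish; when both are positive, $\rho=1$ and $\tau=2$, so $(p-2)\rho+\tau=p$, which lies in $[2\min\{1,p-1\},\,2\max\{1,p-1\}]$ for every $p>1$; when they have opposite signs, $\tau=1$ and $\rho\in(0,1)$, so $(p-2)\rho+1$ sweeps the interval with endpoints $1$ and $p-1$, matching $[\min\{1,p-1\},\,\max\{1,p-1\}]$. Integrating the resulting pointwise bound over $(a,A)\times(b,B)$ yields \eqref{eq:lem1}.

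Finally, \eqref{eq:J1} is immediate from the upper half of \eqref{eq:lem1}, since $(p-1)\min\{1,p-1\}>0$ and the integrand $|\alpha-\beta|^{p-2}(\theta(\alpha)+\theta(\beta))$ is non-negative. Equality in \eqref{eq:J1} forces this integral to vanish, hence $\theta(\alpha)+\theta(\beta)=0$ a.e.\ on $(a,A)\times(b,B)$; combined with $a<A$ and $b<B$, this is equivalent to $A\leqslant 0$ and $B\leqslant 0$. Conversely, if $A,B\leqslant 0$ then $\alpha^+=0$ at $\alpha\in\{a,A\}$ and $\beta^+=0$ at $\beta\in\{b,B\}$, so every one of the four evaluations of $J$ vanishes. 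The main technical point to check is the initial FTC/Fubini step in the regime $p<2$, where $\partial_\alpha\partial_\beta J$ is unbounded along $\alpha=\beta$; this is settled by the two observations that $|\alpha-\beta|^{p-2}\in L^1_{\mathrm{loc}}(\mathbb{R}^2)$ for $p>1$ and that the bracket $(p-2)\rho+\tau$ is uniformly bounded, ensuring absolute integrability of $\partial_\alpha\partial_\beta J$ on bounded rectangles.
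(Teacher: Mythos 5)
Your proof follows essentially the same route as the paper's: represent the four-point quantity as $\int_a^A\int_b^B \partial_\alpha\partial_\beta J$, compute the mixed partial, and bound the bracket $(p-2)\rho+\tau$ between $\min\{1,p-1\}\,\tau$ and $\max\{1,p-1\}\,\tau$, which is exactly the paper's two-sided estimate phrased slightly more compactly. The only real difference is in the rigorous justification of the FTC/Fubini step: the paper deletes $\varepsilon$-strips around $\alpha=0$ and $\beta=0$, proves the estimate on the resulting smooth subrectangles, and passes to the limit, while you assert absolute continuity of the slices $J(\cdot,\beta)$ and $\partial_\alpha J(\alpha,\cdot)$ directly; this claim is true but would need a short verification (in particular, checking AC across the kinks at $\alpha=0$, $\beta=0$ and across the diagonal $\alpha=\beta>0$) rather than being read off from the absolute continuity of $t\mapsto |t|^{p-2}t$ alone.
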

	\begin{proof}
		We start with \textit{formal} computations, assuming that all operations are allowed.
		Observe that
		\begin{equation}
			\label{eq:four_point_integral_representation} 
			J(A,B) - J(a,B) - J(A,b) + J(a,b) = \int_a^A \int_b^B \frac{\partial^2 J}{\partial \alpha \partial \beta}(\alpha, \beta) d\beta d\alpha.   
		\end{equation}
		Differentiating $J$, we obtain
		\begin{equation}
			\frac{\partial J}{\partial \alpha}(\alpha, \beta) = (p-1) |\alpha - \beta|^{p-2} (\theta(\alpha)\alpha - \theta(\beta) \beta) 
			+ 
			|\alpha - \beta|^{p-2}(\alpha - \beta) \theta(\alpha)
		\end{equation}
		and
		\begin{align}
			\frac{\partial^2 J}{\partial \alpha \partial \beta}(\alpha, \beta)
			= 
			&- (p-1)(p-2) |\alpha - \beta|^{p-4}(\alpha - \beta)(\theta(\alpha)\alpha - \theta(\beta) \beta) 
			\\
			&- (p-1) |\alpha - \beta|^{p-2} \theta(\beta) 
			- (p-1) |\alpha - \beta|^{p-2}\theta(\alpha) 
			\\
			\label{eq:JJJJ2}
			&= - (p-1) |\alpha - \beta|^{p-2} \left [ (p-2) \frac {\theta(\alpha)\alpha - \theta(\beta) \beta}{\alpha - \beta} + \theta(\alpha) + \theta(\beta) \right ].
		\end{align}
		For $\alpha \neq \beta$, we have
		\begin{equation}
			0 \leqslant \frac {\theta(\alpha)\alpha - \theta(\beta) \beta}{\alpha - \beta} \leqslant \max \lbrace \theta(\alpha), \theta(\beta) \rbrace \leqslant \theta(\alpha) + \theta(\beta),
		\end{equation}
		and therefore the expression in the square brackets in \eqref{eq:JJJJ2} can be estimated as follows:
		\begin{equation}
			(\theta(\alpha) + \theta(\beta)) 
			\leqslant (p-2) \frac {\theta(\alpha)\alpha - \theta(\beta) \beta}{\alpha - \beta} + \theta(\alpha) + \theta(\beta) \leqslant (p-1) (\theta(\alpha) + \theta(\beta))
		\end{equation}
		for $p \geqslant 2$, and
		\begin{equation}
			(p-1) (\theta(\alpha) + \theta(\beta)) 
			\leqslant (p-2) \frac {\theta(\alpha)\alpha - \theta(\beta) \beta}{\alpha - \beta} + \theta(\alpha) + \theta(\beta) \leqslant (\theta(\alpha) + \theta(\beta))
		\end{equation}
		for $p \in (1,2)$.		
		This formally yields the desired inequalities \eqref{eq:lem1}.
		
		To conclude the proof, let us substantiate the formal calculations. For brevity,
		we denote the integral term in \eqref{eq:lem1} as $I(a,A,b,B)$ and by $C_1$, $C_2$ the corresponding constants on the left- and right-hand sides.
		With these notation, \eqref{eq:lem1} reads as
		\begin{equation}\label{eq:lem12}
			C_1 \, I(a,A,b,B)
			\leqslant
			J(A,B) - J(a,B) - J(A,b) + J(a,b)
			\leqslant
			C_2 \, I(a,A,b,B).
		\end{equation}
		Let us rewrite $J$ as
		\begin{equation}
			J(\alpha,\beta) = 
			\begin{cases}
				|\alpha - \beta|^p, 
				&\alpha, \beta \geqslant 0,\\
				|\alpha-\beta|^{p-2}(\alpha-\beta)\alpha, 
				&\alpha \geqslant 0 > \beta,\\
				|\alpha-\beta|^{p-2}(\alpha-\beta)(-\beta), 
				&\alpha < 0 \leqslant \beta,\\
				0, 
				&\alpha, \beta \leqslant 0.
			\end{cases}
		\end{equation}
		We see that $J$ is continuous in $\mathbb{R}^2$ and there are only three lines on which the smoothness of $J$ can be compromised: $\alpha=0$, $\beta = 0$, and $\alpha=\beta \geqslant 0$. Outside of these three lines, $J$ is $C^\infty$-smooth. On the diagonal except the origin, i.e., when $\alpha = \beta \neq 0$, we see that $J$ is at least $C^1$-smooth (since $p>1$), with first partial derivatives being absolutely continuous. 
		
		Assume that $\alpha<0<A$ and $\beta<0<B$, and take any sufficiently small $\varepsilon > 0$. 
		Thanks to the regularity of $J$ discussed above, all our formal calculations are rigorous on each of the following subrectangles of $[a,A]\times [b,B]$:
		$$
		[\varepsilon,A]\times[\varepsilon,B],
		\quad 
		[a,-\varepsilon]\times[\varepsilon,B],
		\quad 
		[a,-\varepsilon]\times[b,-\varepsilon],
		\quad
		[\varepsilon,A]\times[b,-\varepsilon].
		$$
		(In other words, we deleted from the rectangle $[a,A]\times [b,B]$ narrow strips around the lines $a=0$ and $b=0$.)
		This leads to the validity of the estimates of the type \eqref{eq:lem12} on every subrectangle:
		\begin{align*}
			C_1 \, I(\varepsilon,A,\varepsilon,B)
			\leqslant
			J(A,B) - J(\varepsilon,B) &- J(A,\varepsilon) + J(\varepsilon,\varepsilon)
			\leqslant
			C_2 \, I(\varepsilon,A,\varepsilon,B),\\
			C_1 \, I(a,-\varepsilon,\varepsilon,B)
			\leqslant
			J(-\varepsilon,B) - J(a,B) &- J(-\varepsilon,\varepsilon) + J(a,\varepsilon)
			\leqslant
			C_2 \, I(a,-\varepsilon,\varepsilon,B),\\
			C_1 \, I(a,-\varepsilon,b,-\varepsilon)
			\leqslant
			J(-\varepsilon,-\varepsilon) - J(a,-\varepsilon) &- J(-\varepsilon,b) + J(a,b)
			\leqslant
			C_2 \, I(a,-\varepsilon,b,-\varepsilon),\\
			C_1 \, I(\varepsilon,A,b,-\varepsilon)
			\leqslant
			J(A,-\varepsilon) - J(\varepsilon,-\varepsilon) &- J(A,b) + J(\varepsilon,b)
			\leqslant
			C_2 \, I(\varepsilon,A,b,-\varepsilon).
		\end{align*}
		Let us now pass to the limit as $\varepsilon \to 0$. Since $J$ is continuous, we can do it in all middle terms.
		Noticing that the singularity $|\alpha-\beta|^{p-2}$ in $I(a,A,b,B)$ is integrable, we apply the dominated convergence theorem to deduce that the passage to the limit is allowed on the left- and right-hand sides, as well.
		Thus, we get
		\begin{align*}
			C_1 \, I(0,A,0,B)
			\leqslant
			J(A,B) - J(0,B) &- J(A,0) + J(0,0)
			\leqslant
			C_2 \, I(0,A,0,B),\\
			C_1 \, I(a,0,0,B)
			\leqslant
			J(0,B) - J(a,B) &- J(0,0) + J(a,0)
			\leqslant
			C_2 \, I(a,0,0,B),\\
			C_1 \, I(a,0,b,0)
			\leqslant
			J(0,0) - J(a,0) &- J(0,b) + J(a,b)
			\leqslant
			C_2 \, I(a,0,b,0),\\
			C_1 \, I(0,A,b,0)
			\leqslant
			J(A,0) - J(0,0) &- J(A,b) + J(0,b)
			\leqslant
			C_2 \, I(0,A,b,0).		
		\end{align*}
		Summing these four expressions, we rigorously obtain \eqref{eq:lem12}, which finishes the proof in the case $\alpha<0<A$ and $\beta<0<B$.
		The remaining cases can be covered by the same analysis, and it is even simpler since we have less subrectangles, so we omit further details.
	\end{proof}
	
	\begin{remark}
		One can prove by similar arguments that the integral representation \eqref{eq:four_point_integral_representation} holds for any $a<A$ and $b<B$.
	\end{remark}

	Let us now provide a simple fact which we use in the proof of Lemma~\ref{lem:Pa-Sobolev}.
	
	\begin{lemma}\label{lem:positive-part}
		Let $u \in \Wo$ be a nonnegative function.
		Then there exists a sequence $\{u_n\} \subset C_0^\infty(\Omega)$ of nonnegative functions converging to $u$ in $\Wo$.
	\end{lemma}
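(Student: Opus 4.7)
The plan is to start from an arbitrary $C_0^\infty(\Omega)$-approximation of $u$, chop off its negative parts, and mollify what remains. By the definition of $\Wo$ as the completion of $C_0^\infty(\Omega)$ in the norm $\|\cdot\|_p + [\,\cdot\,]_p$, pick $\{\varphi_n\} \subset C_0^\infty(\Omega)$ with $\varphi_n \to u$ in $\Wo$, and set $v_n = \varphi_n^+$. Each $v_n$ is nonnegative, Lipschitz, and compactly supported in $\mathrm{supp}\,\varphi_n \subset \Omega$, so the whole task reduces to showing that $v_n \to u$ in $\Wo$ and then to a standard smoothing step.

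The crucial step is to show $v_n \to u$ in $\Wo$. The pointwise inequality $|a^+ - b^+| \leqslant |a-b|$ immediately gives $\|v_n - u\|_p \leqslant \|\varphi_n - u\|_p \to 0$, and applied two-pointwise it also gives $[v_n]_p \leqslant [\varphi_n]_p \to [u]_p$; in particular $\{v_n\}$ is bounded in $\Wo$. Passing to a subsequence, $\varphi_n \to u$ a.e., hence $v_n \to u$ a.e. Combining this boundedness, reflexivity, and the compact embedding $\Wo \hookrightarrow L^p(\Omega)$, every subsequence of $\{v_n\}$ has a further subsequence converging weakly in $\Wo$ to a limit which, by $L^p$-convergence and a.e.\ convergence, must coincide with $u$; thus $v_n \rightharpoonup u$ in $\Wo$. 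Weak lower semicontinuity then forces $[u]_p \leqslant \liminf_n [v_n]_p \leqslant \limsup_n [v_n]_p \leqslant [u]_p$, so $[v_n]_p \to [u]_p$. Since $\Wo$ is uniformly convex (as recalled in the introduction), weak convergence combined with convergence of norms upgrades to strong convergence: $v_n \to u$ in $\Wo$.

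Finally, each $v_n$ is a nonnegative Lipschitz function compactly supported in $\Omega$, so the mollification argument already employed in the paper (cf.\ \cite[Lemma~11]{FSV}, invoked in the proof of Lemma~\ref{lem:Pa-Sobolev2}) furnishes $u_n = v_n * \eta_{\varepsilon_n} \in C_0^\infty(\Omega)$, nonnegative, with $\|u_n - v_n\|_{\Wo} < 1/n$ for a sufficiently small $\varepsilon_n > 0$; the triangle inequality then yields $u_n \to u$ in $\Wo$. The main obstacle is the middle step: the Gagliardo seminorm has no convenient two-point chain rule for $t \mapsto t^+$, so strong convergence of $v_n$ in the seminorm is not automatic from that of $\varphi_n$ by direct pointwise estimates. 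Uniform convexity of $\Wo$ circumvents this obstruction by reducing strong convergence to weak convergence together with convergence of norms, both of which follow from the one-variable Lipschitz estimate applied twice.
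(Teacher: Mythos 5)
Your proof is correct, and the overall structure (take $\varphi_n \in C_0^\infty(\Omega)$ converging to $u$, pass to positive parts $v_n = \varphi_n^+$, identify the limit of $v_n$, then mollify) matches the paper's. The one genuine difference is in the step where strong convergence $v_n \to u$ in $\Wo$ is obtained. The paper observes that boundedness yields a weakly convergent subsequence with limit $u$, and then distinguishes two cases: if the convergence is already strong, mollify; if it is only weak, invoke Mazur's lemma to pass to convex combinations of the $v_n$'s, which are still nonnegative Lipschitz functions with compact support, and then mollify. You instead note that the $1$-Lipschitz bound $|a^+ - b^+| \leqslant |a - b|$ gives both $\|v_n - u\|_p \to 0$ and $\limsup_n [v_n]_p \leqslant [u]_p$, while weak lower semicontinuity of $[\cdot]_p$ gives $[u]_p \leqslant \liminf_n [v_n]_p$; hence $[v_n]_p \to [u]_p$, and uniform convexity of $(\Wo, [\cdot]_p)$ (recalled in the introduction) upgrades weak convergence plus norm convergence to strong convergence via the Radon--Riesz property. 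This is slightly cleaner: it proves that strong convergence always holds rather than branching on whether it does, and avoids Mazur's lemma. Both approaches are valid; yours trades the Mazur argument for the Radon--Riesz argument, which buys a single unified proof but relies a bit more heavily on the geometry of the space.
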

	\begin{proof}
		It follows from the definition of $\Wo$ that there exists $\{v_n\} \subset C_0^\infty(\Omega)$ converging to $u$ in $\Wo$.
		By \cite[Theorem~2.7]{BLP}, we have $v_n \to u$ in $L^p(\Omega)$, up to a subsequence.
		Let us consider the sequence of positive parts $\{v_n^+\}$. 
		It is not hard to see that each $v_n^+$ is a Lipschitz functions with compact support in $\Omega$, that is, $\{v_n^+\} \subset C_0^{0,1}(\Omega)$.
		Since $|a^+-b^+| \leqslant |a-b|$ for any $a,b \in \mathbb{R}$, we get $v_n^+ \to u^+ \equiv u$ in $L^p(\Omega)$ and $[v_n^+]_p \leqslant [v_n]_p$ for any $n$ (cf.\ \cite{musnaz} for elaboration).
		Consequently, $\{v_n^+\}$ is bounded in $\Wo$ 
		and hence converges weakly in $\Wo$ to a function $v \in \Wo$, up to a subsequence.
		We again deduce from \cite[Theorem~2.7]{BLP} that $v_n^+ \to v$ in $L^p(\Omega)$, up to a subsequence, which yields $v = u$. 
		If $v_n^+ \to u$ in $\Wo$, then, recalling that each $v_n^+$ has a compact support, we can approximate $v_n^+$ by nonnegative $C_0^\infty(\Omega)$-functions in the norm of $\W$ via mollification (see \cite[Lemma~11]{FSV}). 
		Taking a diagonal sequence, we obtain the desired claim.
		If $v_n^+ \to u$ only weakly in $\Wo$ (and not strongly), then we apply Mazur's lemma to obtain a sequence $\{w_n\}$ consisting of finite \textit{convex} combinations of $v_n^+$'s which converge to $u$ in $\Wo$. 
		In particular, any $w_n$ is nonnegative and belongs to $C_0^{0,1}(\Omega)$.
		Arguing as above, we finish the proof.
	\end{proof}
	
	\begin{remark}\label{rem:W-Lip}
		Since $\Wo$ is the completion of $C_0^\infty(\Omega)$ with respect to the norm $[\,\cdot\,]_p$ (see \cite[Remark~2.5]{BLP}), $\Wo$ can be equivalently defined as the completion of the space $C_0^{0,1}(\Omega)$ of Lipschitz functions with compact support in $\Omega$ with respect to $[\,\cdot\,]_p$. 
		Indeed, it is not hard to check that $[w]_p < \infty$ for any $w \in C_0^{0,1}(\Omega)$, and hence $w \in \W$. Then, recalling that $w$ has a compact support in $\Omega$, we apply mollification arguments (see \cite[Lemma~11]{FSV}) to conclude that $w$ can be approximated by $C_0^\infty(\Omega)$-functions in the norm $[\,\cdot\,]_p$.
	\end{remark}

	Finally, we provide two auxiliary lemmas needed to prove Propositions~\ref{prop:lambda2-characterization} and~\ref{prop:altern-char-lens}, respectively.
	\begin{lemma}[\cite{BrPar}]\label{lem:appendix:inequality}
		Let $U, V \in \mathbb{R}$ be such that $U V \leqslant 0$.
		Then 
		\begin{equation}\label{eq:proof:main-ineq0}
			|U-V|^{p-2} (U-V) (|\alpha|^p U - |\beta|^p V)
			\geqslant
			|\alpha U - \beta V|^p
		\end{equation}
		for any $(\alpha,\beta) \in S^1$.
		Moreover, equality holds in \eqref{eq:proof:main-ineq0} if and only if $U V =0$ or $\alpha = \beta$.
	\end{lemma}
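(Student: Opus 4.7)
The plan is to reduce the inequality to a straightforward application of Jensen's inequality after a sign normalization. First I would dispose of the boundary case $UV = 0$ by direct substitution: if, say, $U = 0$, then the left-hand side equals $|V|^{p-2}(-V)(-|\beta|^p V) = |\beta|^p |V|^p$, which coincides with the right-hand side $|\beta V|^p$; the case $V=0$ is analogous. This already confirms one half of the equality description and allows us to assume $UV < 0$ in what follows.

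Next, assuming without loss of generality that $U > 0$ and $V < 0$ (the opposite case is recovered by sending $(U,V,\alpha,\beta) \mapsto (-U,-V,\alpha,\beta)$ or by an analogous direct computation), I would set $a = U > 0$ and $b = -V > 0$. Then $U - V = a+b > 0$, so $|U-V|^{p-2}(U-V) = (a+b)^{p-1}$, and $|\alpha|^p U - |\beta|^p V = |\alpha|^p a + |\beta|^p b$, while $\alpha U - \beta V = \alpha a + \beta b$. The claimed inequality thus becomes
\begin{equation}
(a+b)^{p-1}\bigl(|\alpha|^p a + |\beta|^p b\bigr) \;\geqslant\; |\alpha a + \beta b|^p.
\end{equation}

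The key observation is that $\alpha a + \beta b = (a+b)\bigl(\lambda \alpha + (1-\lambda)\beta\bigr)$ with $\lambda = a/(a+b) \in (0,1)$. Since $t \mapsto |t|^p$ is convex for $p > 1$, Jensen's inequality gives
\begin{equation}
\bigl|\lambda \alpha + (1-\lambda)\beta\bigr|^p \;\leqslant\; \lambda |\alpha|^p + (1-\lambda)|\beta|^p \;=\; \frac{|\alpha|^p a + |\beta|^p b}{a+b},
\end{equation}
and multiplying through by $(a+b)^p$ yields exactly the inequality we need. Note that the assumption $(\alpha,\beta) \in S^1$ is not actually used in the inequality itself, which holds for any real $\alpha,\beta$ by homogeneity.

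For the equality characterization, I would invoke the strict convexity of $t \mapsto |t|^p$ when $p > 1$: equality in Jensen occurs precisely when $\alpha = \beta$ or when the convex combination is trivial, i.e., $\lambda \in \{0,1\}$. The latter would force $a = 0$ or $b=0$, contradicting $UV < 0$; hence in that regime equality is equivalent to $\alpha = \beta$. Combined with the earlier observation that equality automatically holds whenever $UV = 0$, this completes the characterization $UV=0$ or $\alpha=\beta$. There is no real obstacle here; the only point requiring mild care is the sign bookkeeping when $U$ and $V$ swap signs, but by symmetry one case suffices.
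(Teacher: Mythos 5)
Your proof is correct, and it takes a genuinely different (and more self-contained) route than the paper. The paper does not actually prove this lemma: it refers the reader to the proof of Proposition~4.2 in \cite{BrPar}, observing that \eqref{eq:proof:main-ineq0} coincides with Eq.~(4.7) there after a homogeneity normalization, and states that the equality characterization ``follows from \cite{BrPar} by inspection of the arguments'' with details omitted. Your argument, by contrast, gives a direct, elementary, self-contained derivation. The key reduction --- set $a=U>0$, $b=-V>0$ so that the claim becomes $(a+b)^{p-1}\bigl(|\alpha|^p a + |\beta|^p b\bigr) \geqslant |\alpha a + \beta b|^p$ --- turns the assertion into an instance of Jensen's inequality for the convex map $t\mapsto|t|^p$ with weight $\lambda=a/(a+b)\in(0,1)$, and the strict convexity of $|t|^p$ for $p>1$ immediately delivers the equality case $\alpha=\beta$ (since $\lambda\in\{0,1\}$ is ruled out when $UV<0$). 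The sign bookkeeping is handled correctly, the $UV=0$ boundary cases are verified by direct substitution, and your observation that the normalization $(\alpha,\beta)\in S^1$ is superfluous (both sides are $p$-homogeneous in $(\alpha,\beta)$) is accurate. What your approach buys is transparency: the inequality and its equality cases become a one-line consequence of strict convexity rather than requiring inspection of an external proof; what the paper's citation buys is brevity and consistency with the framework of \cite{BrPar} that the rest of Section~\ref{sec:secondeignvalue} already leans on.
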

	\begin{proof}
		The proof of the inequality \eqref{eq:proof:main-ineq0} is contained in the proof of \cite[Proposition~4.2]{BrPar} (see, more precisely, the proof of \cite[Eq.~(4.7), pp.~346-347]{BrPar}) by noting that \eqref{eq:proof:main-ineq0} coincides with \cite[Eq.~(4.7)]{BrPar} (via factoring out $\omega_1$ and $\omega_2$ in \cite[Eq.~(4.7)]{BrPar} by the homogeneity).
		Although unstated explicitly in \cite{BrPar}, 
		the second part of the lemma follows from \cite{BrPar}
		by inspection of the arguments.
		We omit details.
	\end{proof}

	\begin{lemma}\label{lem:appendix:inequality2}
		Let $U, V \in \mathbb{R}$ be such that $U V \leqslant 0$.
		Then 
		\begin{align}
			\label{eq:proof:main-ineq01}
			|U - V|^{p-2} (U- V) U
			&\geqslant
			|U-s V|^{p-2} (U- s V) U,\\  
			\label{eq:proof:main-ineq02}
			|U - V|^{p-2} (U- V) (-V)
			&\geqslant
			|sU-V|^{p-2} (sU- V) (-V),
		\end{align}
		for any $s \in [0,1]$.
	\end{lemma}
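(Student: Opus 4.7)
The plan is to treat both inequalities uniformly: each asserts that a certain function of a parameter $t$, evaluated at $t=1$, dominates its value at $t=s$ for any $s\in[0,1]$. Concretely, for \eqref{eq:proof:main-ineq01} I would introduce
$$
g(t) = |U-tV|^{p-2}(U-tV)\,U, \qquad t\in[0,1],
$$
so that $g(1)$ is the left-hand side and $g(s)$ the right-hand side. The problem then reduces to showing that $g$ is non-decreasing on $[0,1]$.

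Before differentiating, I would dispose of the degenerate cases: if $U=0$ then $g\equiv 0$, and if $V=0$ then $g\equiv |U|^p$, so in both cases monotonicity is trivial. Otherwise $U$ and $V$ are both nonzero and, by $UV\leq 0$, have strictly opposite signs; consequently $U-tV$ has the same sign as $U$ for every $t\in[0,1]$ and in particular never vanishes on this interval. This observation is the one subtle point in the whole argument: for $p\in(1,2)$ the map $x\mapsto |x|^{p-2}x$ fails to be differentiable at $x=0$, and the sign condition $UV\leq 0$ is precisely what keeps the relevant interval away from that singularity.

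With the singularity ruled out, $g\in C^1([0,1])$ and a direct computation yields
$$
g'(t) \;=\; (p-1)\,|U-tV|^{p-2}\,(-V)\,U \;=\; -(p-1)\,|U-tV|^{p-2}\,UV \;\geq\; 0,
$$
establishing monotonicity and hence \eqref{eq:proof:main-ineq01}. For \eqref{eq:proof:main-ineq02} I would apply the same argument to $h(t)=|tU-V|^{p-2}(tU-V)(-V)$, whose derivative is again $-(p-1)|tU-V|^{p-2}UV\geq 0$; alternatively, \eqref{eq:proof:main-ineq02} follows from \eqref{eq:proof:main-ineq01} by the substitution $(U,V)\mapsto(-V,-U)$, which preserves the hypothesis $UV\leq 0$ and interchanges the two inequalities. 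The main (and essentially only) obstacle is the regularity check noted above; once the singular point is seen to lie outside $[0,1]$, each inequality is a one-line consequence of the sign of $g'$.
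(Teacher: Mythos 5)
Your proof is correct and uses the same core idea as the paper: differentiate in the parameter and observe that the derivative $-(p-1)|U-sV|^{p-2}UV$ is nonnegative under $UV\leqslant 0$, hence the function is nondecreasing on $[0,1]$. If anything your treatment is a touch cleaner: you dispose of the degenerate cases $U=0$, $V=0$ and verify explicitly that $U-sV$ stays away from $0$ on $[0,1]$ (so the derivative formula is legitimate even for $p\in(1,2)$), whereas the paper only notes the derivative formula "whenever $U-sV\neq 0$" and additionally invokes Lemma~\ref{lem:appendix:inequality} with $(\alpha,\beta)=(1,0)$ to pin down $h(1)\geqslant h(0)$, a step that your regularity observation renders unnecessary.
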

	\begin{proof}
		Define a continuous function $h:[0,1] \to \mathbb{R}$ as $h(s)=|U-s V|^{p-2} (U- s V) U$.
		We see that $h'(s) = -(p-1)|U-s V|^{p-2} UV \leqslant 0$ whenever $U-s V \neq 0$, i.e., $h$ is nondecreasing.
		Since $h(1) \geqslant h(0)$ by Lemma~\ref{lem:appendix:inequality} with $(\alpha,\beta)=(1,0)$, 
		we conclude that $h(1) \geqslant h(s)$ for all $s \in [0,1]$, which is exactly \eqref{eq:proof:main-ineq01}.
		In the same way, one can establish \eqref{eq:proof:main-ineq02}.
	\end{proof}

	\addcontentsline{toc}{section}{\refname}
	\small

\end{document}